\definecolor{webgreen}{rgb}{0,.5,0}
\definecolor{webbrown}{rgb}{.8,0,0}
\definecolor{emphcolor}{rgb}{0.5,0.95,0.95}
\ifpdf \hypersetup{pdftex,
	pdfstartview=FitH, 
	bookmarksopen=true,
	bookmarksnumbered=true
} \else \hypersetup{dvips} \fi
\DeclarePairedDelimiterX{\innp}[1]{\langle}{\rangle}{\ifblank{#1}{\:\cdot\:}{#1}}
\DeclarePairedDelimiterX{\floor}[1]{\lfloor}{\rfloor}{\ifblank{#1}{\:\cdot\:}{#1}}
\DeclarePairedDelimiterX{\ceil}[1]{\lceil}{\rceil}{\ifblank{#1}{\:\cdot\:}{#1}}
\let\abs\relax\DeclarePairedDelimiterX{\abs}[1]{\lvert}{\rvert}{\ifblank{#1}{\:\cdot\:}{#1}}
\let\norm\relax\DeclarePairedDelimiterX{\norm}[1]{\lVert}{\rVert}{\ifblank{#1}{\:\cdot\:}{#1}}
\numberwithin{equation}{section}
\newtheorem{theorem}{Theorem}[section]
\newtheorem{proposition}{Proposition}[section]
\newtheorem{remark}{Remark}[section]
\numberwithin{remark}{section} 
\numberwithin{proposition}{section}
\numberwithin{corollary}{section}
\begin{document}
	
\title[The frequency process in a non-neutral two-type CBSP with competition and its genealogy]{The frequency process in a non-neutral two-type continuous-state branching process with competition and its genealogy}

\author[I. Nu\~nez]{Imanol Nu\~nez$^*$}
\thanks{$*$\, Department of Probability and Statistics, Centro de Investigaci\'on en Matem\'aticas A.C. Calle Jalisco s/n. C.P. 36240, Guanajuato, Mexico. Email: imanol.nunez@cimat.mx.  }
\author[J. L. P\'erez]{Jos\'e-Luis P\'erez$^\dagger$}
\thanks{$\dagger$\, Department of Probability and Statistics, Centro de Investigaci\'on en Matem\'aticas A.C. Calle Jalisco s/n. C.P. 36240, Guanajuato, Mexico. Email: jluis.garmendia@cimat.mx.  } 

\begin{abstract}
We consider a population growth model given by a two-type continuous-state branching process with immigration and competition, introduced by Ma in \cite{maStochasticEquationsTwotype2014}. 
We study the relative frequency of one of the types in the population when the total mass is forced to be constant at a dense set of times.
The resulting process is described as the solution to an SDE, which we call \textit{the culled frequency process}, generalizing the $\Lambda$-asymmetric frequency process introduced by Caballero et al. in \cite{caballeroRelativeFrequencyTwo2024}. 
We obtain conditions for the culled frequency process to have a moment dual and show that it is given by a branching-coalescing continuous-time Markov chain that describes the genealogy of the two-type CBI with competition. 
Finally, we obtain a large population limit of the culled frequency process, resulting in a deterministic ordinary differential equation (ODE).
Two particular cases of the limiting ODE are studied to determine if general two-type branching mechanisms and general Malthusians can lead to the coexistence of the two types in the population. 
\end{abstract}

\maketitle


\section{Introduction}
\label{sec:intro}

The main result by Birkner et al. in \cite{birknerAlphaStableBranchingBetaCoalescents2005} shows that the frequency process describing the proportion of the mass descended from an ancestral individual associated with a measure-valued branching process can be transformed into a $\Lambda$-Fleming--Viot process using a suitable time change involving a functional of the total mass if and only if the measure-valued process is $\alpha$-stable.  
In particular, this implies that if $X^{(1)}$ and $X^{(2)}$ are independent and identically distributed $\alpha$-stable continuous-state branching processes (CSBPs), then the ratio process $R = X^{(1)} / (X^{(1)} + X^{(2)})$ can be time-changed to become Markovian. 
This allows describing the time-changed genealogy of an $\alpha$-stable CSBP through the block-counting process of a $\mathrm{Beta}(2 - \alpha, \alpha)$-coalescent 
that is in moment duality to the aforementioned Markovian version of the frequency process.
Foucart and Hénard, in \cite{foucartStableContinuousstateBranching2013}, extended this result by adding an immigration term to the CSBP, making it a continuous-state branching process with immigration (CBI). 
This modification is reflected in the genealogy, which is now described by the block-counting process of a $\mathrm{Beta}(2 - \alpha, \alpha - 1)$-coalescent, where the difference with the result of Birkner et al. is due to a mutation mechanism in the genealogy that arises because of immigration in the CBI.   
Nonetheless, their work remains in the independent and identically distributed $\alpha$-stable case as they use the time change methodology of Birkner et al.

In \cite{caballeroRelativeFrequencyTwo2024}, Caballero et al. studied the case where $X^{(1)}$ and $X^{(2)}$ are modeled by CBIs with different reproduction mechanisms that are not necessarily $\alpha$-stable, enabling the study of a population with two species reproducing by different mechanisms. 
In their work, the processes are still assumed to be independent. 
The time change perspective does not apply here, see Lemma 3.5 of \cite{birknerAlphaStableBranchingBetaCoalescents2005}. 
Instead, inspired by the works \cite{gillespieNalturalSelectionWithingeneration1974} and \cite{gillespieNaturalSelectionWithingeneration1975} of Gillespie, Caballero et al. used a novel method called culling (see section 4.2 \cite{caballeroRelativeFrequencyTwo2024}) that allows them to control the total population size, see Section \ref{subsec:culling}.
By using this technique, they were able to extend the result proved by Birkner et al. in \cite{birknerAlphaStableBranchingBetaCoalescents2005} to non-$\alpha$-stable cases with different reproduction mechanisms. 
The resulting frequency process was called the \emph{$\Lambda$-asymmetric frequency process}.  

In certain cases, the $\Lambda$-asymmetric frequency process has a moment dual that is the block-counting process of a branching-coalescing process, which is reminiscent of the ancestral selection graph of Krone and Neuhauser \cite{kroneAncestralProcessesSelection1997}. 
The resulting genealogy includes mutation and pairwise branching, reflecting, respectively, immigration in the associated two-type CBI and efficiency in resource consumption (as in \cite{GONZALEZCASANOVA202033}). 
  
Gónzalez-Casanova et al. in \cite{casanovaAlphastableBranchingBetafrequency2024} dropped the independent and identically distributed assumptions made by Birkner et al. on $X^{(1)}$ and $X^{(2)}$ by considering a two-type CBI $X=(X^{(1)}, X^{(2)})$. 
Gónzalez-Casanova et al. showed that the frequency process $R$ can still be time-changed to be Markovian by keeping only the $\alpha$-stability assumption on the associated two-type CBI. 
In some cases, the time-changed frequency process coincides with the $\beta$-Fleming--Viot process defined by Griffiths in \cite{griffithsMultiTypeLCoalescent2016}, which is the weighted moment 
dual to the block-counting process of a two-type branching-coalescing process.  
This connection is enlightening as it implies that the time-changed genealogy of the two-type $\alpha$-stable CBI $X$ is described by a branching-coalescing continuous-time Markov chain. 

This paper aims to generalize these results to a population with two types of individuals ($1$ and $2$), where: i) individuals of one type might produce individuals of the other type (cross-branching); ii) the reproduction mechanisms of each one of the two types might be different; iii) there might be limited resources for which both types compete, leading to different carrying capacities according to the efficiency of the types in resource usage. 
The two types of individuals will be modeled by $X^{(1)}$ and $X^{(2)}$, the components of a two-type population growth model $X$ that extends a CBI using general Malthusians (inspired by Lambert's branching process with logistic growth \cite{lambertBranchingProcessLogistic2005}), introduced by Ma in \cite{maStochasticEquationsTwotype2014}, which we call a two-type CBI with competition. 
Hence, in this paper, we will not assume that $X^{(1)}$ and $X^{(2)}$ are independent, identically distributed nor $\alpha$-stable. 
Our approach consists of applying the culling technique developed in \cite{caballeroRelativeFrequencyTwo2024} to obtain the frequency process of individuals of type $1$ in the population, under the assumption that the total population size remains constant in a dense set of times. We call the resulting process the \textit{culled frequency process}. 
This will allow us to study the dynamics of the genetic profile of populations with two types of individuals where the two types are closely related by the reproduction mechanisms and the competition for limited resources. 

The model introduced in this paper can be valuable for studying populations with two types of individuals that interact and compete for limited resources (see \cite{sinervoRockPaperScissors1996} for an example in ecology).
By considering competition and resource constraints, our model provides insights into how genetic diversity is maintained or altered, and how certain traits prevail or coexist under selective pressures. This framework is a powerful tool for exploring genetic variation, reproduction, mutation, and competition in evolutionary biology, ecology, and conservation biology.

The outline of the paper is the following. 
In section \ref{sec:preliminaries}, we establish the basic notation used throughout the paper and also introduce the population growth model $X$ studied in this work. 
In section \ref{sec:ratio_process} we define the culled frequency process of individuals of type 1, characterizing its dynamics 
through a stochastic differential equation (SDE), see \eqref{eq:sde_r_general}, that is seen to be the limit of the culling procedure, given in section \ref{subsec:culling}. 
The main result in section \ref{sec:ratio_process} is Proposition \ref{prop:important_uniqueness_existence}, which ensures the existence of a unique strong solution to the SDE \eqref{eq:sde_r_general}. 
The proof relies on a simple but enlightening extension of Theorem 3.2 in \cite{fuStochasticEquationsNonnegative2010}. 
With the culled frequency process defined, in section \ref{sec:duality}, we establish a result stating that whenever it has a moment dual, it is a branching-coalescing continuous-time Markov chain, 
describing the genealogy of the two-type CBI with competition. 
Finally, in section \ref{sec:large_population}, the large population limit of the frequency process is considered, resulting in a deterministic ordinary differential equation (ODE), see \eqref{eq:limit_r_ode}.  
Two particular cases of the limiting ODE are studied to determine if general two-type branching mechanisms and general Malthusians can lead to the coexistence of the two types in the population.  

\section{Notation and preliminaries}
\label{sec:preliminaries}

In this section, we introduce the notation we will use in the rest of the paper and the two-type CBI with competition that we will consider for our results. 
In what follows we take $c \in \mathbb{R}_+^2$ and $\eta \in \mathbb{R}_+^2$ to be 
vectors of non-negative constants. 
Until further notice, for $i, j \in \{1, 2\}$ let $b_{ij} : \mathbb{R} \to \mathbb{R}$ 
be a locally Lipschitz function such that 
$b_{ij}(x) = 0$ for $x \leq 0$ and $b_{ij}(x) \geq 0$ for $i \neq j$. 
Moreover, we consider Borel measures $\mu_1, \mu_2$ and $\nu$ over 
$U_2 := \mathbb{R}_+^2 \setminus \{0\}$ that fulfill the integrability conditions 
\begin{align*}
    & \int_{U_2} (1 \wedge \norm{w}) \nu(dw) < \infty, \\
    & \int_{U_2} \bigl( \norm{w} \wedge \norm{w}^2 + w_j \bigr) \mu_i(dw) 
    < \infty, \quad i \neq j, 
\end{align*}
where $\norm{w} = (w_1^2 + w_2^2)^{1/2}$ for $w \in U_2$. 

Extending one of the special cases in \cite{liStrongSolutionsJumptype2012}, see also \cite{maStochasticEquationsTwotype2014}, we 
consider $X = (X^{(1)}, X^{(2)})$ as the solution of the stochastic differential equation 
defined by 
\begin{equation} \label{eq:branching_sde}
\begin{split}
    X_t^{(i)}
    = & x^{(i)} 
    + \int_0^t \bigl( \eta_i + b_{ii}(X_s^{(i)}) + b_{ij}(X_s^{(j)}) 
    \bigr) ds 
    + \int_0^t \sqrt{2 c^i X_{s-}^{(i)}} dB_s^{(i)} \\
    & + \int_0^t \int_{U_2} \int_0^{X_{s-}^{(i)}} w_i \widetilde{N}_R^i(ds, dw, du) 
    + \int_0^t \int_{U_2} \int_0^{X_{s-}^{(j)}} w_i N_R^{j}(ds, dw, du) \\
    & + \int_0^t \int_{U_2} w_i N_I(ds, dw) ,
\end{split}
\end{equation}
for $i, j \in \{1, 2\}$, $i \neq j$, and all $t \geq 0$ such that the SDE makes sense (before the time of explosion). 
In~\eqref{eq:branching_sde} we consider:
\begin{itemize}
    \item $x^{(i)} \geq 0$, for $i \in \{1, 2\}$, is deterministic; 
    \item $B = (B^1, B^2)$ is a two-dimensional Brownian Motion; 
    \item $N_I(ds, dw)$ is a Poisson random measure over $\mathbb{R}_+ \times U_2$ with 
        intensity measure $ds \nu(dw)$; 
    \item for $i \in \{1, 2\}$, $N_R^i(ds, dw, du)$ is a Poisson random measure over 
        $\mathbb{R}_+ \times U_2 \times \mathbb{R}_+$ with intensity measure 
        $ds \mu_i(dw) du$, and $\widetilde{N}_R^{(i)}$ denotes the associated 
        compensated random measure.
\end{itemize}
All of the previous elements are assumed to be defined in the same complete probability space and independent. 
Note that the integrability conditions for $\mu_1$ and $\mu_2$ tell us that these two measures have finite means. 
This allows us to define $X$ through the SDE \eqref{eq:branching_sde}.

The fact that there exists a unique strong solution up to the time of explosion, in case it is finite, to \eqref{eq:branching_sde} is 
due to Theorems 3.1 and 3.2 in \cite{maStochasticEquationsTwotype2014}. 
Indeed, the locally Lipschitz condition allows us to obtain pathwise uniqueness up to the time of explosion by virtually the same argument that proves Theorem 3.1 in 
\cite{maStochasticEquationsTwotype2014}. 
Meanwhile, the existence of a unique positive strong solution follows by a slight 
modification of the proof of Theorem 3.2 in \cite{maStochasticEquationsTwotype2014}.

For the rest of the paper, we will set $z = x^{(1)} + x^{(2)}$ and 
$r = x^{(1)} / z$.  
Moreover, given $z > 0$ we define the function $T_{z, 2} : \mathbb{R}_+^2 \to [0, 1]^2$ 
by 
\[
    T_{z, 2}(w) = \biggl( \frac{w_1}{z + w_1 + w_2}, \frac{w_2}{z + w_1 + w_2} \biggr), 
\]
and the operator  
$\mathbf{T}_2^{(z)} : \mathcal{M}(\mathbb{R}_+^2) \to \mathcal{M}([0, 1]^2)$ 
that maps every measure $\mu$ over $\mathbb{R}_+^2$ to the measure 
$\mu \circ T_{z, 2}^{-1}$ over $[0, 1]^2$; i.e. 
\begin{equation} \label{eq:t2Def}
    \mathbf{T}_2^{(z)} \mu (A) \coloneq \mu \circ T_{z, 2}^{-1}(A) 
    \quad\text{for all } A \in \mathcal{B}([0, 1]^2).
\end{equation}

\section{The culled frequency process}
\label{sec:ratio_process}
Our motivation is to study the dynamics of the genetic profile of a population consisting of two types of individuals who might reproduce using different mechanisms that in general are not independent, subject to competition and immigration.
To this end, we start this section by introducing two processes that characterize the relative frequency of individuals of type $1$ and the total size of the population.

\subsection{Total population and frequency process}
\label{subsec:totalPopRatioProcess}
We start by defining the total population process $Z$ by 
\[
Z_t = X_t^{(1)} + X_t^{(2)}, \quad t \geq 0,\ Z_0 = z,
\]
where we assume that $z > 0$, recalling that $z = x^{(1)} + x^{(2)}$. 
Additionally, we introduce the frequency process of type $1$ individuals given by
\[
    R_t = \frac{X_t^{(1)}}{X_t^{(1)} + X_t^{(2)}} 1_{\{t < \zeta\}} + \dagger 1_{\{t \geq \zeta\}} , \quad t \geq 0, \ R_0=r,
\]
where $\dagger$ is a cemetery state, $\zeta := \inf\{t \geq 0 : Z_t \in \{0,\infty\}\}$ and $r = x^{(1)} / (x^{(1)} + x^{(2)})$. 
We introduce the cemetery state in order to define the process $R$ when the process $Z$ explodes or reaches zero in finite time. 
We note that the frequency process $R$ is not Markovian. 
To address this issue, throughout this section, we will use the culling method developed in \cite{caballeroRelativeFrequencyTwo2024} to construct a Markov process that encodes the dynamics of the frequency process $R$.

It is readily seen that $(R, Z)$ is a Markov process with values in 
$([0, 1] \cup \{\dagger\}) \times [0, + \infty]$. To avoid using the cemetery state, given $0 < \varepsilon < z < L$, we will consider the 
stopping time $\tau := \tau_{\varepsilon}^- \wedge \tau_L^+$, where 
\[
    \tau_\varepsilon^- := \inf\{ t \geq 0 : Z_t = \varepsilon \}
    \quad\text{and}\quad 
    \tau_L^+ := \inf\{t \geq 0 : Z_t > L\}.
\]
Then the process $(R, Z)$ stopped at time $\tau$ will encode the dynamics of the 
populations described by $X$ before the total population $Z$ becomes relatively small 
or explodes. 
This is formalized by a local martingale problem that will be stated in the 
next result, which is a simple consequence of the SDE \eqref{eq:branching_sde} that 
defines $X$ and Itô's formula.

\begin{proposition} \label{prop:rz_generator}
    For every $f \in C^2([0, 1] \times \mathbb{R}_+)$, the process 
    \[
        M_t := f(R_{t \wedge \tau}, Z_{t \wedge \tau}) - f(r, z) 
        - \int_0^{t \wedge \tau} \mathcal{L} f(R_s, Z_s) ds
    \]
    is a local martingale for 
    \begin{align}
        \mathcal{L} f(r, z) = {}
        & \bigl[ \eta_1 + b_{11}(rz) + b_{12}((1 - r) z) \bigr] 
        \biggl[ \partial_1 f(r, z) \frac{1 - r}{z} + \partial_2 f(r, z) \biggr] \\
        & + \bigl[ \eta_2 + b_{21}(rz) + b_{22}((1 - r) z) \bigr] 
        \biggl[ - \partial_1 f(r, z) \frac{r}{z} + \partial_2 f(r, z) \biggr] 
        \label{eq:rz_generator} \\ 
        & + \frac{c_2 - c_1}{z} \partial_1 f(r, z) 2 r (1 - r) 
        + \frac{r (1 - r)}{z} \bigl[ (1-r) c_1 + r c_2 \bigr] \partial_{11}^2 f(r, z) \\
        & + c_1 \bigl[ r (1 - r) \partial_{12}^2 f(r, z) 
        + r (1 - r) \partial_{21}^2 f(r, z) + r z \partial_{22}^2 f(r, z) \bigr] \\
        & + c_2 \bigl[ - r (1 - r) \partial_{12}^2 f(r, z) 
        - r (1 - r) \partial_{21}^2 f(r, z) + (1 - r) z \partial_{22}^2 f(r, z) \bigr] \\
        & + r z \int_{U_2} \biggl[ f \biggl( r + (1 - r) \frac{w_1}{z + w_1 + w_2} - 
        r \frac{w_2}{z + w_1 + w_2}, z + w_1 + w_2 \biggr) - f(r, z) \\ 
        & \hspace*{7cm} - w_1 \biggl( \frac{1 - r}{z} \partial_1 f(r, z) 
        + \partial_2 f(r, z) \biggr) \biggr] \mu_1(dw) \\ 
        & + (1 - r) z \int_{U_2} \biggl[ f \biggl( r + (1 - r) \frac{w_1}{z + w_1 + w_2} -
        r \frac{w_2}{z + w_1 + w_2}, z + w_1 + w_2 \biggr) - f(r, z) \\ 
        & \hspace*{7cm} - w_2 \biggl( - \frac{r}{z} \partial_1 f(r, z) 
        + \partial_2 f(r, z) \biggr) \biggr] \mu_2(dw) \\ 
        & + \int_{U_2} \biggl[  
                f \biggl( r + (1 - r) \frac{w_1}{z + w_1 + w_2} - {}
            r \frac{w_2}{z + w_1 + w_2}, z + w_1 + w_2 \biggr) - f(r, z) \biggr] \nu(dw) 
    \end{align}
\end{proposition}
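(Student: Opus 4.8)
The plan is to recognize $(R,Z)$ as a smooth deterministic image of the driving process $X$ and to read off $\mathcal{L}$ as the generator of $X$ pulled back through that map, via the jump--diffusion It\^o formula. Set $\Phi(x_1,x_2) := \bigl( x_1/(x_1+x_2),\, x_1 + x_2 \bigr)$ and $F := f \circ \Phi$, so that $(R_t, Z_t) = \Phi(X_t^{(1)}, X_t^{(2)})$ and $f(R_t, Z_t) = F(X_t^{(1)}, X_t^{(2)})$ before $\zeta$. Since $\Phi$ is $C^\infty$ on the open half-space $\{x_1 + x_2 > 0\}$ and $f \in C^2$, the composite $F$ is $C^2$ there. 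All jumps of $X$ are nonnegative (the marks $w$ lie in $U_2 = \mathbb{R}_+^2 \setminus \{0\}$) whereas $Z$ can only move toward the origin through its continuous part, so for $s \le \tau$ one has $Z_{s-} \in [\varepsilon, L]$; hence along the stopped trajectory $F$ together with its first and second derivatives stays bounded, the only dangerous factors $1/z$ being controlled by $1/\varepsilon$. This is exactly the purpose of stopping at $\tau$.

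The core of the argument is the chain rule. First I would record the first derivatives $\partial_{x_1} F = \tfrac{1-r}{z}\partial_1 f + \partial_2 f$ and $\partial_{x_2} F = -\tfrac{r}{z}\partial_1 f + \partial_2 f$, where $r = x_1/z$ and $z = x_1 + x_2$, and then the second derivatives, noting that the nonlinearity of $\Phi$ produces the extra first-order pieces $\partial_{x_1 x_1}\Phi_1 = -2(1-r)/z^2$ and $\partial_{x_2 x_2}\Phi_1 = 2r/z^2$. Applying the It\^o formula to $F(X_{t \wedge \tau})$ splits $f(R_{t \wedge \tau}, Z_{t \wedge \tau}) - f(r,z)$ into a finite-variation part, a continuous local martingale coming from the Brownian integrals, and a purely discontinuous local martingale coming from the compensated Poisson integrals. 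Matching the finite-variation part with $\mathcal{L}f$ term by term: the drift $\eta_i + b_{ii}(X^{(i)}) + b_{ij}(X^{(j)})$ of $X^{(i)}$ times $\partial_{x_i} F$ yields the first two lines of \eqref{eq:rz_generator}; and, since the Brownian motions are independent with $d\langle X^{(i)} \rangle_s = 2 c_i X^{(i)}_s\, ds$, the term $c_i X^{(i)} \partial_{x_i x_i} F$ delivers both the $\partial^2 f$ diffusion terms and, through the pieces $\partial_{x_i x_i}\Phi_1$, precisely the leftover drift $\tfrac{c_2 - c_1}{z}\,2 r (1-r)\,\partial_1 f$.

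For the jumps, the key observation is that a firing of $N_R^1$ with mark $w$ (at rate $X^{(1)} = rz$) displaces $X$ by $(w_1, w_2)$, and an elementary computation gives $\tfrac{rz + w_1}{z + w_1 + w_2} = r + (1-r)\tfrac{w_1}{z + w_1 + w_2} - r \tfrac{w_2}{z + w_1 + w_2}$, which is exactly the first argument appearing in \eqref{eq:rz_generator}, so the increment of $F$ equals $f(R^w, Z^w) - f(r,z)$ with $R^w$ that value and $Z^w = z + w_1 + w_2$. Because the first coordinate of the $\mu_1$-mark is compensated in \eqref{eq:branching_sde} through the $\widetilde{N}_R^1$ integral, combining the compensator drift $- rz \bigl( \int_{U_2} w_1 \mu_1(dw) \bigr) \partial_{x_1} F$ with the raw jump contribution $rz \int_{U_2} [ f(R^w, Z^w) - f(r,z) ] \mu_1(dw)$ leaves exactly $rz \int_{U_2} [ f(R^w, Z^w) - f(r,z) - w_1 \partial_{x_1} F ] \mu_1(dw)$; symmetrically the $\mu_2$ integral carries the subtraction $w_2 \partial_{x_2} F$, while $N_I$ is uncompensated and contributes $\int_{U_2} [ f(R^w, Z^w) - f(r,z) ] \nu(dw)$. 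Substituting the expressions for $\partial_{x_1} F$ and $\partial_{x_2} F$ recovers the three jump integrals of \eqref{eq:rz_generator}.

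It remains to justify that the two martingale parts are genuine local martingales and that the drift integral is well defined; this is where the integrability hypotheses enter and where the only real care is needed. A second-order Taylor expansion shows the $\mu_i$-integrand $F(x+w) - F(x) - w_i \partial_{x_i} F$ is $O(\norm{w}^2 + w_j)$ near the origin and $O(\norm{w} + w_j)$ for large marks, hence $\mu_i$-integrable by $\int_{U_2} ( \norm{w} \wedge \norm{w}^2 + w_j ) \mu_i(dw) < \infty$, while the $\nu$-integrand is $O(1 \wedge \norm{w})$ and thus $\nu$-integrable; the boundedness of the derivatives of $F$ for $s \le \tau$ supplies the constants. Standard localization, stopping in addition when the quadratic variations of the Brownian and compensated-jump integrals first exceed a level $n$, then gives the local martingale property of $M$. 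The main obstacle is precisely this bookkeeping: keeping track that each $N_R^i$ firing moves both coordinates although only one of them is compensated, and verifying that this mixed compensated/uncompensated structure, together with the stated moment conditions, makes every jump integral converge even though a single mark can push $Z$ arbitrarily far beyond $[\varepsilon, L]$.
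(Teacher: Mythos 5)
Your proposal is correct and is exactly the argument the paper has in mind: the paper offers no detailed proof, stating only that the proposition "is a simple consequence of the SDE \eqref{eq:branching_sde} and It\^o's formula," and your computation --- pulling $f$ back through $\Phi(x_1,x_2)=(x_1/(x_1+x_2),\,x_1+x_2)$, matching the drift, the $2c_iX^{(i)}$ quadratic-variation terms (including the extra first-order piece $\tfrac{c_2-c_1}{z}2r(1-r)\partial_1 f$ from the curvature of $\Phi_1$), and handling the mixed compensated/uncompensated jump structure of $N_R^1,N_R^2,N_I$ --- fills in precisely that outline with the correct bookkeeping. The stopping at $\tau$ and the localization you describe are the right way to make the local-martingale claim rigorous.
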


It is important to remark that even if the process $(R, Z)$ is Markovian, in general 
$R$ will not be an autonomous process (i.e. it will depend on $Z$), hence it is not 
Markovian. 

\subsection{Culled frequency process} 
\label{subsec:asymmetric}
Heuristically, if we want a Markov process that captures the dynamics of the frequency 
process $R$, it should have an infinitesimal generator given by \eqref{eq:rz_generator} 
with a caveat: the function $f$ should only depend on $r$. 
Thus, the desired process should not depend on $Z$, which would correspond to $z$ being 
constant in the generator. 
The culling procedure introduced in \cite{caballeroRelativeFrequencyTwo2024} allows to 
achieve this as a limiting procedure by keeping $Z$ equal to a constant $z>0$ in a dense set 
of times over $\mathbb{R}_+$.  
We now introduce the limiting process of the culling procedure, $R^{(z,r)}$, with constant population size $z>0$ and started at $r$, which we simply call 
\emph{the culled frequency process}, which is a generalization of the $\Lambda$-asymmetric frequency process introduced by Caballero et al. in \cite{caballeroRelativeFrequencyTwo2024}.

For $z > 0$ and $r \in [0, 1]$, let $R^{(z, r)}$ be the solution to the stochastic 
differential equation 
\begin{align}
    R_t^{(z, r)} 
    = {} 
    & r + \int_0^t \bigl[ \widetilde{D}(R_{s-}^{(z, r)}) + S(R_{s-}^{(z, r)})  
    + S_c(R_{s-}^{(z, r)}) +  m(R_{s-}^{(z, r)}) + m_c^1(R_{s-}^{(z, r)}) 
    + m_c^2(R_{s-}^{(z, r)}) \bigr] ds \\ 
    & + \int_0^t \sigma(R_{s-}^{(z, r)}) dB_s 
    + \int_0^t \int_{U_2} \bigl[ \widetilde{g}^{(z)}(R_{s-}^{(z, r)}, w) + 
    \widetilde{h}^{(z)}(R_{s-}^{(z, r)}, w) \bigr] N_3(ds, dw) 
    \label{eq:sde_r_general} \\
    & + \int_0^t \int_{U_2} \int_0^\infty \bigl[ g_1^{(z)}(R_{s-}^{(z,r)}, w, v) + 
    h_1^{(z)}(R_{s-}^{(z,r)}, w, v) \bigr] \widetilde{N}_{1}(ds, dw, dv)  \\
    & + \int_0^t \int_{U_2} \int_0^\infty \bigl[g_2^{(z)}(R_{s-}^{(z,r)}, w, v)
    + h_2^{(z)}(R_{s-}^{(z,r)}, w, v) \bigr] \widetilde{N}_{2}(ds, dw, dv)  
\end{align}
where $r > 0$, 
\begin{equation} \label{eq:sdeTerms}
\begin{split}
    \widetilde{D}(r) = {} 
    & \biggl[ b_{11}(zr) \frac{1 - r}{z} - b_{22}(z (1 - r)) \frac{r}{z} 
    + b_{12}(z (1 - r)) \frac{1 - r}{z} - b_{21}(zr) \frac{r}{z} \biggr] 
    1_{\{r \in [0, 1]\}} , 
    \\
    S(r) = {} 
    & 
    \frac{2}{z} (c_2 - c_1) 
    r (1 - r) 1_{\{r \in [0, 1]\}}, 
    \\
    S_c(r) = {}
    & \biggl[ \sum_{i = 1}^2 (-1)^i \int_{U_2} 
    \frac{w_i (w_1 + w_2)}{z + w_1 + w_2} \mu_i(dw) \biggr] r (1 - r) 
    1_{\{r \in [0, 1]\}} ,
    \\
    m(r) = {} 
    & \frac{1}{z} \bigl[ \eta_1 (1 - r) - \eta_2 r \bigr], 
    \\
    m_c^1(r) = {} 
    & - r^2 \int_{U_2} \frac{z w_2}{z + w_1 + w_2} \mu_1(dw) 1_{\{r \in [0, 1]\}} ,
    \\
    m_c^2(r) = {} 
    & (1 - r)^2 \int_{U_2} \frac{z w_1}{z + w_1 + w_2} \mu_2(dw) 1_{\{r \in [0, 1]\}} ,
    \\
    \sigma(r) = {}
    & \sqrt{ \frac{2}{z} r (1 - r) \bigl[ c_1 (1 - r) + c_2 r \bigr] } 
    1_{\{r \in [0, 1]\}},  
\end{split}
\end{equation}
and 
\begin{itemize}
    \item $B$ is a Brownian Motion; 
    \item for $i \in \{1, 2\}$, $N_i(ds, dw, dv)$, is a Poisson random measure over 
        $\mathbb{R}_+ \times U_2 \times \mathbb{R}_+$ with intensity measure 
        $ds \mu_i(dw) dv$, and $\widetilde{N}_i$ denotes the associated compensated 
        random measure; 
    \item $N_3(ds, dw)$ is a Poisson random measure over $\mathbb{R}_+ \times U_2$ with 
        intensity measure $ds \nu(dw)$; 
    \item for $q, w \in \mathbb{R}_+ \times U_2$ we set 
        \[
            \widetilde{g}^{(z)}(q, w) 
            := (1 - q) \frac{w_1}{z + w_1 + w_2} 1_{\{q \in [0, 1]\}} 
            \quad\text{and}\quad 
            \widetilde{h}^{(z)}(q, w) 
            := - q \frac{w_2}{z + w_1 + w_2} 1_{\{q \in [0, 1]\}} ;
        \]
    \item for $q, w, v \in \mathbb{R}_+ \times U_2 \times \mathbb{R}_+$ we define 
        \begin{align*}
            g_1^{(z)}(q, w, v) & := \widetilde{g}^{(z)}(q, w) 1_{\{v \leq qz\}} , \\
            g_2^{(z)}(q, w, v) & := \widetilde{g}^{(z)}(q, w) 1_{\{v \leq (1 - q)z\}} \\
            h_1^{(z)}(q, w, v) & := \widetilde{h}^{(z)}(q, w) 1_{\{v \leq qz\}} , \\
            h_2^{(z)}(q, w, v) & := \widetilde{h}^{(z)}(q, w) 1_{\{v \leq (1 - q)z\}}.
        \end{align*}
\end{itemize}
The stochastic processes ($B, N_1, N_2$ and $N_3$) used in the definition of the SDE \eqref{eq:sde_r_general} are assumed 
to be independent and defined in the same complete probability space. 
We now state a result concerning the solution of \eqref{eq:sde_r_general}.

\begin{proposition} \label{prop:important_uniqueness_existence}
    There exists a unique strong solution to \eqref{eq:sde_r_general} with values in 
    $[0, 1]$. Moreover, for every $t > 0$ there exists $K(t) > 0$ such that for any 
    $r, s \in [0, 1]$ we get 
    \begin{equation} \label{eq:lipchitz_initial_condition_r}
        \mathbb{E} \bigl[ \abs{ R_t^{(z, r)} - R_t^{(z, s)} } \bigr] 
        \leq K(t) \abs{r - s} .
    \end{equation}
\end{proposition}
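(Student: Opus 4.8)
The plan is to derive pathwise uniqueness and the stability estimate \eqref{eq:lipchitz_initial_condition_r} from one It\^o--Yamada--Watanabe computation, and to obtain existence by pairing pathwise uniqueness with weak existence, following the scheme of Theorem~3.2 in \cite{fuStochasticEquationsNonnegative2010}. Because every coefficient in \eqref{eq:sdeTerms} carries the factor $1_{\{r\in[0,1]\}}$ and every jump displacement the factor $1_{\{q\in[0,1]\}}$, I would first read \eqref{eq:sde_r_general} as an $\mathbb{R}$-valued equation whose coefficients vanish off $[0,1]$, and afterwards check that a solution started from $r\in[0,1]$ never leaves $[0,1]$. The latter is a boundary inspection in the spirit of the one that keeps \eqref{eq:branching_sde} non-negative: at $r=0$ one has $\sigma(0)=0$, the drift equals $b_{12}(z)/z+\eta_1/z+\int_{U_2}\tfrac{zw_1}{z+w_1+w_2}\mu_2(dw)\ge0$, and every admissible displacement $\widetilde g^{(z)}(0,w)+\widetilde h^{(z)}(0,w)=w_1/(z+w_1+w_2)\ge0$ is non-negative, so drift and jumps point inward; the case $r=1$ is symmetric, and a comparison argument then confines the solution to $[0,1]$.

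\textbf{Coefficient bounds.} On $[0,1]$ the arguments $zr,z(1-r)$ lie in the bounded set $[0,z]$, so the locally Lipschitz $b_{ij}$ are Lipschitz there; multiplying by the affine factors $\tfrac{1-r}{z},\tfrac rz$ shows $\widetilde D$ is Lipschitz, and $S,m,m_c^1,m_c^2,S_c$ are polynomials in $r$ times the finite constants $\int_{U_2}\tfrac{w_i(w_1+w_2)}{z+w_1+w_2}\mu_i(dw)$, $\int_{U_2}\tfrac{zw_2}{z+w_1+w_2}\mu_1(dw)$, $\int_{U_2}\tfrac{zw_1}{z+w_1+w_2}\mu_2(dw)$, all finite since their integrands are dominated up to a constant by $w_1+w_2$ and $\mu_1,\mu_2$ have finite means. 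Hence the drift of \eqref{eq:sde_r_general} is Lipschitz on $[0,1]$. For the diffusion, $\sigma^2(r)=\tfrac2z r(1-r)[c_1(1-r)+c_2r]$ is a non-negative polynomial, so the elementary inequality $\abs{\sqrt a-\sqrt b}^2\le\abs{a-b}$ yields $\abs{\sigma(r)-\sigma(s)}^2\le C\abs{r-s}$, the $\tfrac12$-Hölder continuity demanded by the Yamada--Watanabe method. Finally, the displacement $\widetilde g^{(z)}(q,w)+\widetilde h^{(z)}(q,w)=\tfrac{(1-q)w_1-qw_2}{z+w_1+w_2}$ is Lipschitz in $q$ with modulus $\tfrac{w_1+w_2}{z+w_1+w_2}$; this modulus is $\nu$-integrable (being $\le C(1\wedge\norm{w})$, which also makes the uncompensated $N_3$-integral well defined) and its square is $\mu_i$-integrable (being $\le C(\norm{w}\wedge\norm{w}^2)$, which controls the compensated $N_1,N_2$-integrals).

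\textbf{The Yamada--Watanabe estimate and the main obstacle.} Let $R=R^{(z,r)}$ and $\widetilde R=R^{(z,s)}$ solve \eqref{eq:sde_r_general} driven by the same $B,N_1,N_2,N_3$, and set $D=R-\widetilde R$. With the usual $C^2$ approximants $\phi_k\uparrow\abs{\cdot}$ satisfying $\abs{\phi_k'}\le1$, $\phi_k''\ge0$ and $\abs{x}\phi_k''(x)\le 2/k$, I would apply It\^o's formula to $\phi_k(D_t)$ and take expectations. The local-martingale parts drop out; the Brownian contribution is $\le\tfrac12\mathbb{E}\int_0^t\phi_k''(D_u)\abs{\sigma(R_u)-\sigma(\widetilde R_u)}^2du\le Ct/k\to0$; and the drift contribution is $\le C\int_0^t\mathbb{E}\abs{D_u}du$ by the Lipschitz bounds. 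The delicate term, and the one requiring the announced extension of \cite[Theorem~3.2]{fuStochasticEquationsNonnegative2010}, is the jump part arising from $N_1,N_2$. Writing $g_i^{(z)}+h_i^{(z)}=(\widetilde g^{(z)}+\widetilde h^{(z)})1_{\{v\le\rho_i(q)\}}$ with $\rho_1(q)=qz$ and $\rho_2(q)=(1-q)z$, I would split the coefficient difference as $[\widetilde g^{(z)}(R,w)+\widetilde h^{(z)}(R,w)-\widetilde g^{(z)}(\widetilde R,w)-\widetilde h^{(z)}(\widetilde R,w)]1_{\{v\le\rho_i(R)\}}$ plus $[\widetilde g^{(z)}(\widetilde R,w)+\widetilde h^{(z)}(\widetilde R,w)][1_{\{v\le\rho_i(R)\}}-1_{\{v\le\rho_i(\widetilde R)\}}]$. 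The first piece is the Lipschitz-in-$q$ displacement difference, integrated over a $v$-region of length $\rho_i(R)\le z$; the second is supported where the thresholds disagree, a $v$-set of Lebesgue length $\abs{\rho_i(R)-\rho_i(\widetilde R)}=z\abs{D}$, on which exactly one process moves by at most $\tfrac{w_1+w_2}{z+w_1+w_2}$. Integrating in $v$ turns both pieces into contributions of order $z\abs{D}\int_{U_2}\bigl(\tfrac{w_1+w_2}{z+w_1+w_2}\wedge(\tfrac{w_1+w_2}{z+w_1+w_2})^2\bigr)\mu_i(dw)\le C\abs{D}$. The point is that in \cite{fuStochasticEquationsNonnegative2010} the branching indicator $1_{\{v\le x\}}$ is tied to the very quantity $x$ being compared and enters through a single kernel, whereas here the threshold $\rho_i(q)$ and the displacement $\widetilde g^{(z)}+\widetilde h^{(z)}$ are distinct Lipschitz functions of the state and two kernels act at once; the comparison estimate has to be re-derived for the more general form $G(q,w)1_{\{v\le\rho(q)\}}$, and this is the only genuinely non-routine step.

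\textbf{Conclusion.} Collecting the estimates gives $\mathbb{E}\phi_k(D_t)\le\abs{r-s}+C\int_0^t\mathbb{E}\abs{D_u}du+o(1)$; letting $k\to\infty$ so that $\phi_k(D)\to\abs{D}$ yields $\mathbb{E}\abs{D_t}\le\abs{r-s}+C\int_0^t\mathbb{E}\abs{D_u}du$, whence Gronwall's lemma gives $\mathbb{E}\abs{R_t^{(z,r)}-R_t^{(z,s)}}\le e^{Ct}\abs{r-s}$, i.e.\ \eqref{eq:lipchitz_initial_condition_r} with $K(t)=e^{Ct}$; the choice $r=s$ is pathwise uniqueness. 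Weak existence of a $[0,1]$-valued solution follows from a routine tightness and martingale-problem argument, the coefficients being continuous and bounded on $[0,1]$ with generator matching \eqref{eq:rz_generator} at frozen $z$. Pathwise uniqueness together with weak existence then delivers, via the jump-diffusion Yamada--Watanabe theorem as used in \cite{fuStochasticEquationsNonnegative2010}, the unique strong solution, completing the proof.
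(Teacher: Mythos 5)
Your overall scheme (Yamada--Watanabe functions $\phi_k$, a Gronwall estimate giving uniqueness and \eqref{eq:lipchitz_initial_condition_r} simultaneously, then weak existence plus pathwise uniqueness) matches the paper's, and your drift, diffusion, and $[0,1]$-confinement estimates are fine. The genuine gap is in the jump estimate, which is the heart of the proof. Your pivotal displayed bound, of order
\[
z\abs{D}\int_{U_2}\Bigl(\tfrac{w_1+w_2}{z+w_1+w_2}\wedge\bigl(\tfrac{w_1+w_2}{z+w_1+w_2}\bigr)^2\Bigr)\mu_i(dw),
\]
does not come out of the computation you describe. A first-order bound on either of your two pieces (displacement difference times indicator, or displacement times indicator difference) costs $z\abs{D}\int_{U_2}\frac{w_1+w_2}{z+w_1+w_2}\,\mu_i(dw)$, and this integral can be infinite: the standing assumption $\int_{U_2}(\norm{w}\wedge\norm{w}^2+w_j)\mu_i(dw)<\infty$ for $j\neq i$ gives only square-integrability of the same-type coordinate $w_i$ near the origin (think of a stable-type $\mu_1$ with $\int_{\{w_1\le 1\}}w_1\,\mu_1(dw)=\infty$). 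To replace the first power by the square one must use second-order control, i.e.\ Lemma 3.1 of \cite{liStrongSolutionsJumptype2012}; but that lemma (i) requires the jump maps $x\mapsto x+(\text{jump})(x,w,v)$ to be non-decreasing, and (ii) produces terms of order $t/k$, which vanish as $k\to\infty$, not terms of order $\abs{D}$ that feed into Gronwall. Neither feature is compatible with your decomposition: the combined displacement $(\widetilde g^{(z)}+\widetilde h^{(z)})(q,w)\,1_{\{v\le\rho_i(q)\}}$ that you carry through the argument has net size $((1-q)w_1-qw_2)/(z+w_1+w_2)$, which can be negative, so the associated map $x\mapsto x+(\widetilde g^{(z)}+\widetilde h^{(z)})(x,w)1_{\{v\le\rho_i(x)\}}$ has a downward jump at the threshold and is not monotone; the lemma simply does not apply to it. (The $\nu$-driven term is harmless, since $\int_{U_2}(1\wedge\norm{w})\nu(dw)<\infty$; the problem is exclusively with $\mu_1,\mu_2$.)

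What makes the proof work --- and what the paper does --- is a split aligned with the moment structure of $\mu_1,\mu_2$ rather than with the algebraic form of the coefficients: separate the same-type parts $g_1^{(z)}$ (under $\mu_1$) and $h_2^{(z)}$ (under $\mu_2$) from the cross parts $h_1^{(z)}$ and $g_2^{(z)}$. The maps $x\mapsto x+g_1^{(z)}(x,w,v)$ and $x\mapsto x+h_2^{(z)}(x,w,v)$ \emph{are} non-decreasing, so Lemma 3.1 of \cite{liStrongSolutionsJumptype2012} applies to them and, combined with the bound $\int_{U_2}\int_0^\infty\abs{g_1^{(z)}(x,w,v)-g_1^{(z)}(y,w,v)}^2dv\,\mu_1(dw)\le 2z\abs{x-y}\int_{U_2}\bigl(\frac{w_1}{z+w_1+w_2}\bigr)^2\mu_1(dw)$ (and its analogue for $h_2^{(z)}$), yields $O(t/k)$ contributions. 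The cross parts involve only $w_2$ against $\mu_1$ and $w_1$ against $\mu_2$, whose first moments are finite by hypothesis --- this is exactly the ``bounded variation for cross-branching'' that the paper emphasizes --- so plain first-order bounds give $C\abs{D}$ Gronwall terms; moreover, the $v$-compensator of $\Delta_{h_1^{(z)}}$ is precisely $m_c^1(\xi^1_s)-m_c^1(\xi^2_s)$ (likewise $\Delta_{g_2^{(z)}}$ and $m_c^2$), which is why those drift terms cancel in the It\^o expansion. Your proposal never distinguishes the two kinds of coordinates and never verifies any monotonicity, so the step you yourself flag as ``the only genuinely non-routine step'' is exactly the step that is missing, and the estimate you propose in its place fails.
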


Before presenting the proof, let us make a couple of remarks. 
A key element in the proof of the preceding proposition will be a slight extension of 
Theorem 3.2 in \cite{liStrongSolutionsJumptype2012}, taking advantage that the branching mechanism $\mu_i$ is of bounded variation for cross-branching, meaning that $\int_{U_2} (1 \wedge w_j) \mu_i(dw) < \infty$ whenever $i \neq j$.  
In addition, formally, we should also prove that the solution takes values in $[0, 1]$, 
but this is omitted as it follows along the same lines that the first step in the 
proof of Proposition 4.1 in \cite{caballeroRelativeFrequencyTwo2024}. 
Let us now prove Proposition \ref{prop:important_uniqueness_existence}. 

\begin{proof}
\textit{Pathwise uniqueness.}  
We start by computing some bounds that will be needed in the proof. 
First note that the local Lipschitz assumption on $b_{ij}$ for $i, j \in \{1, 2\}$, 
together with the definition of $\widetilde{D}, S, S_c, m, m_c^1$ and $m_c^2$ in 
\eqref{eq:sdeTerms} respectively, 
implies the existence of a constant $K_d > 0$ such that for any $x, y \in [0, 1]$, 
\begin{align} \MoveEqLeft
    \abs{ \widetilde{D}(x) - \widetilde{D}(y) } 
    + \abs{S(x) - S(y)} + \abs{m(x) - m(y)} \\ 
    & + \abs{S_c(x) - S_c(y)} 
    + \abs{m_c^1(x) - m_c^1(y)} + \abs{m_c^2(x) - m_c^2(y)}
    \leq K_d \abs{x - y}.\label{aux_uni_1}
\end{align}
Moreover, note that for $x, y \in [0, 1]$ we get directly  
\begin{equation}\label{aux_uni_2}
    \int_{U_2} \abs{ \widetilde{g}^{(z)}(x, w) + \widetilde{h}^{(z)}(x, w)
    - \widetilde{g}^{(z)}(y, w) - \widetilde{h}^{(z)}(y, w) } \nu(dw) 
    \leq \abs{x - y} \int_{U_2} \frac{w_1 + w_2}{z + w_1 + w_2} \nu(dw),
\end{equation}
while the bounds 
\begin{align*}
    \int_0^\infty \int_{U_2} \abs{ h_1^{(z)}(x, w, v) - h_1^{(z)}(y, w, v) } \mu_1(dw) dv 
    & \leq 2 \abs{x - y} \int_{U_2} w_2 \mu_1(dw), \\
    \int_0^\infty \int_{U_2} \abs{ g_2^{(z)}(x, w, v) - g_2^{(z)}(y, w, v) } \mu_2(dw) dv 
    & \leq 2 \abs{x - y} \int_{U_2} w_1 \mu_2(dw), 
\end{align*}
follow respectively from the inequalities
\begin{align*}
    \abs{ h_1^{(z)}(x, w, v) - h_1^{(z)}(y, w, v) } 
    & \leq \frac{w_2}{z + w_1 + w_2} ( \abs{x - y} 1_{\{v \leq (x \wedge y) z\}} 
    + 1_{\{ (x \wedge y) z < v \leq (x \vee y) z\}} ), \\
    \abs{ g_2^{(z)}(x, w, v) - g_2^{(z)}(y, w, v) } 
    & \leq \frac{w_1}{z + w_1 + w_2} ( \abs{x - y} 1_{\{v \leq (1 - x \vee y) z\}} 
    + 1_{ \{ (1 - x \vee y) z < v \leq (1 - x \wedge y) z \} } ), 
\end{align*}
valid for any $(w, v) \in U_2 \times \mathbb{R}_+$.  
On the other hand, it is readily seen that for $x, y \in [0, 1]$ and $\sigma$ defined in 
\eqref{eq:sdeTerms}, 
\begin{equation}
    \abs{ \sigma(x) - \sigma(y) }^2 \leq \frac{6 (c_1 + c_2)}{z} \abs{x - y}.\label{aux_uni_3}
\end{equation}
In addition, similar bounds to those obtained for $h_1^{(z)}$ and $g_2^{(z)}$ allow us 
to obtain the inequalities 
\begin{align}
    \int_{U_2} \int_0^\infty \abs{ g_1^{(z)}(x, w, v) - g_1^{(z)}(y, w, v) }^2 dv \mu_1(dw)
    & \leq 2 z \abs{x - y} \int_{U_2} \biggl( \frac{w_1}{z + w_1 + w_2} \biggr)^2 \mu_1(dw), \label{aux_uni_4}
\end{align}
and
\begin{align}
    \int_{U_2} \int_0^\infty \abs{ h_2^{(z)}(x, w, v) - h_2^{(z)}(y, w, v) }^2 dv \mu_1(dw)
    & \leq 2 z \abs{x - y} \int_{U_2} \biggl( \frac{w_2}{z + w_1 + w_2} \biggr)^2 \mu_2(dw),\label{aux_uni_5}
\end{align}
which are valid for any $x , y \in [0, 1]$. 
Finally notice that for any $(w, v) \in U_2 \times \mathbb{R}_2$, the mappings 
\begin{align}
    & x \mapsto x + g_1^{(z)}(x, w, v) = 
    x + (1 - x) \frac{w_1}{z + w_1 + w_2} 1_{\{v \leq xz\}} \\
    \shortintertext{and}
    & x \mapsto x + h_2^{(z)}(x, w, v) = x \biggl( 1 
    - \frac{w_2}{z + w_1 + w_2} 1_{\{v \leq (1 - x) z\}} \biggr)
\end{align}
are non-decreasing for $x \in [0, 1]$. 

Now let $\xi^1$ and $\xi^2$ be two solutions to \eqref{eq:sde_r_general} and set 
$\zeta = \xi^1 - \xi^2$. 
For notational convenience, for $\psi : \mathbb{R}_+ \times U_2 \to \mathbb{R}$ and 
$\rho : \mathbb{R}_+ \times U_2 \times \mathbb{R}_+ \to \mathbb{R}$, we define 
\[
    \Delta_{\psi}(x, y, w) := \psi(x, w) - \psi(y, w) 
    \quad\text{and}\quad
    \Delta_{\rho}(x, y, w, v) := \rho(x, w, v) - \rho(y, w, v)
\]
for $w \in U_2$, $x,y\in\mathbb{R}_+$, and $v \in \mathbb{R}_+$. 
Let us consider the sequence of functions $\{\phi_k : k \in \mathbb{N}\}$ as in 
\cite{liStrongSolutionsJumptype2012}, recalling that they fulfill that 
$\phi_k(z) \to \abs{z}$ non-decreasingly as $k \to \infty$, 
$0 \leq \operatorname{sgn}(z) \phi'(z) \leq 1$, 
and $0 \leq \abs{z} \phi_k''(z) \leq 2 / k$ for every $k \in \mathbb{N}$. 
Then applying Itô's formula yields, for $t \in \mathbb{R}_+$,
\begin{align*}
    \phi_k(\zeta_t) = {} 
    & M_t^k 
    + \int_0^t \phi_k'(\zeta_s) \bigl[ \widetilde{D}(\xi_s^1) - \widetilde{D}(\xi_s^2) \bigr] 
    ds 
    + \int_0^t \phi_k'(\zeta_s) \bigl[ S(\xi_s^1) - S(\xi_s^2) \bigr] ds \\ 
    & + \int_0^t \phi_k'(\zeta_s) \bigl[ S_c(\xi_s^1) - S_c(\xi_s^2) \bigr] ds 
    + \int_0^t \phi_k'(\zeta_s) \bigl[ m(\xi_s^1) - m(\xi_s^2) \bigr] ds \\
    & + \frac{1}{2} \int_0^t \phi_k''(\zeta_s) \bigl[\sigma(\xi_s^1) - \sigma(\xi_s^2)\bigr]^2
    ds \\
    & + \int_0^t \int_{U_2} \Bigl[ 
        \phi_k\bigl( \zeta_s + \Delta_{\widetilde{g}^{(z)} + \widetilde{h}^{(z)}}(
        \xi_s^1, \xi_s^2, w) \bigr) 
    - \phi_k(\zeta_s) \Bigr] \nu(dw) ds \\
    & + \int_0^t \int_{U_2} \int_0^\infty \Bigl[ 
        \phi_k \bigl( \zeta_s + \Delta_{g_1^{(z)} + h_1^{(z)}}(\xi_s^1, \xi_s^2, w, v) \bigr)
    - \phi_k\bigl(\zeta_s + \Delta_{g_1^{(z)}}(\xi_s^1, \xi_s^2, w, v)\bigr) \Bigr] 
    dv \mu_1(dw) ds \\
    & + \int_0^t \int_{U_2} \int_0^\infty \Bigl[ 
        \phi_k \bigl( \zeta_s + \Delta_{g_2^{(z)} + h_2^{(z)}}(\xi_s^1, \xi_s^2, w, v) \bigr)
    - \phi_k\bigl(\zeta_s + \Delta_{h_2^{(z)}}(\xi_s^1, \xi_s^2, w, v)\bigr) \Bigr] 
    dv \mu_2(dw) ds \\
    & + \int_0^t \int_{U_2} \int_0^\infty \Bigl[ 
        \phi_k \bigl( \zeta_s + \Delta_{g_1^{(z)}}(\xi_s^1, \xi_s^2, w, v) \bigr)
    - \phi_k(\zeta_s) - \Delta_{g_1^{(z)}}(\xi_s^1, \xi_s^2, w, v) \phi_k'(\zeta_s) \Bigr] 
    dv \mu_1(dw) ds \\
    & + \int_0^t \int_{U_2} \int_0^\infty \Bigl[ 
        \phi_k \bigl( \zeta_s + \Delta_{h_2^{(z)}}(\xi_s^1, \xi_s^2, w, v) \bigr)
    - \phi_k(\zeta_s) - \Delta_{h_2^{(z)}}(\xi_s^1, \xi_s^2, w, v) \phi_k'(\zeta_s) \Bigr] 
    dv \mu_2(dw) ds, 
\end{align*}
where 
\begin{align*}
    M_t^k = {}
    & \phi_k(\zeta_0) +
    \int_0^t \phi_k'(\zeta_s) \bigl[ \sigma(\xi_s^1) - \sigma(\xi_s^2) \bigr] dB_s \\
    & + \int_0^t \int_{U_2} \Bigl[ 
        \phi_k \bigl( \zeta_{s-} + 
        \Delta_{\widetilde{g}^{(z)} + \widetilde{h}^{(z)}}(
        \xi_{s-}^1, \xi_{s-}^2, w) \bigr)
    - \phi_k(\zeta_{s-}) \Bigr] \widetilde{N}_3(ds, dw) \\
    & + \int_0^t \int_{U_2} \int_0^\infty \Bigl[ 
        \phi_k \bigl( \zeta_{s-} 
        + \Delta_{g_1^{(z)} + h_1^{(z)}}(\xi_{s-}^1, \xi_{s-}^2, w, v) \bigr)
    - \phi_k(\zeta_{s-}) 
    \Bigr] \widetilde{N}_1(ds, dw, dv) \\
    & + \int_0^t \int_{U_2} \int_0^\infty \Bigl[ 
        \phi_k \bigl( \zeta_{s-} 
        + \Delta_{g_2^{(z)} + h_2^{(z)}}(\xi_{s-}^1, \xi_{s-}^2, w, v) \bigr)
    - \phi_k(\zeta_{s-}) 
    \Bigr] \widetilde{N}_2(ds, dw, dv),\qquad t \in \mathbb{R}_+,
\end{align*}
is a martingale due to the fact that $\zeta_s \in [0, 1]$ for all $s \in \mathbb{R}_+$ 
almost surely. 
One might wonder why do the terms involving $\Delta_{h_1^{(z)}}$ and 
$\Delta_{g_2^{(z)}}$ do not appear as compensators in the expansion of $\phi_k(\zeta_t)$, and why neither the terms involving $m_c^1$ and $m_c^2$ appear. 
This is answered by noting that for every $s \in [0, t]$ we have the equalities 
\begin{align*}
    \int_{U_2} \int_0^\infty \Delta_{h_1^{(z)}}(\xi_s^1, \xi_s^2, w, v) dv \mu_1(du)
    & = m_c^1(\xi_s^1) - m_c^1(\xi_s^2) ,\\
    \int_{U_2} \int_0^\infty \Delta_{g_2^{(z)}}(\xi_s^1, \xi_s^2, w, v) dv \mu_2(du)
    & = m_c^2(\xi_s^1) - m_c^2(\xi_s^2),
\end{align*}
so the terms of the form $(m_c^i(\xi_s^1) - m_c^i(\xi_s^2)) \phi_k'(\zeta_s) ds$ end up 
cancelling with the $\Delta$ terms.  
By the previous remarks, we can deduce bounds to obtain the pathwise uniqueness of \eqref{eq:sde_r_general} in a similar fashion to Theorem 3.2 in \cite{liStrongSolutionsJumptype2012}.  
Indeed, we first note that by using \eqref{aux_uni_1} we obtain, for $t \in \mathbb{R}_+$,
\begin{align*} \MoveEqLeft
    \int_0^t \phi_k'(\zeta_s) \Bigl[ 
        \bigl( \widetilde{D}(\xi_s^1) - \widetilde{D}(\xi_s^2) \bigr) 
        + \bigl( S(\xi_s^1) - S(\xi_s^2) \bigr) 
        + \bigl( S_c(\xi_s^1) - S_c(\xi_s^2) \bigr) 
        + \bigl( m(\xi_s^1) -m(\xi_s^2) \bigr)
    \Bigr] ds \\
    & \leq K_d \int_0^t \abs{\zeta_s} \, \abs{\phi_k'(\zeta_s)} ds 
    \leq K_d \int_0^t \abs{\zeta_s} ds,
\end{align*} 
where the second inequality follows from the fact that $\abs{\phi_k'(z)} \leq 1$. 
For the term involving $\sigma$ note that by \eqref{aux_uni_2},
\begin{align}
    \int_0^t \phi_k''(\zeta_s) \bigl[ \sigma(\xi_s^1) - \sigma(\xi_s^2) \bigr]^2 ds 
    & \leq \frac{6 (c_1 + c_2)}{z} \int_0^t \abs{\zeta_s} \phi_k''(\zeta_s) ds
    \leq \frac{12 (c_1 + c_2)}{z k} t,\qquad t \in \mathbb{R}_+,
\end{align}
where for the last inequality we have used that $0 \leq \abs{z} \phi_k''(z) \leq 2/k$. 
To control the term with the measure $\nu$ we note that, using \eqref{aux_uni_3},
\begin{align*} \MoveEqLeft
    \int_0^t \int_{U_2} \bigl[ \phi_k\bigl(\zeta_s + \Delta_{\widetilde{g}^{(z)} +  \widetilde{h}^{(z)}}(\xi_s^1, \xi_s^2, w)\bigr) - \phi_k(\zeta_s) \bigr] \nu(dw) ds \\
    & = \int_0^t \int_{U_2} \int_{\zeta_s}^{\zeta_s + \Delta_{\widetilde{g}^{(z)} +  \widetilde{h}^{(z)}}(\xi_s^1, \xi_s^2, w)} \phi_k'(y) dy \nu(dw) ds \\
    & \leq \int_0^t \int_{U_2} \abs{ \widetilde{g}^{(z)}(\xi_s^1, w) + \widetilde{h}^{(z)}(\xi_s^1, w) - \widetilde{g}^{(z)}(\xi_s^2, w) - \widetilde{h}^{(z)}(\xi_s^2, w) } \nu(dw) ds \\
    & \leq \biggl( \int_{U_2} \frac{w_1 + w_2}{z + w_1 + w_2} \nu(dw) \biggr) \int_0^t \abs{\zeta_s} ds,\qquad t \in \mathbb{R}_+.
\end{align*}
Similarly, we obtain both of the following bounds  
\begin{align*} 
    \MoveEqLeft
    \int_0^t \int_{U_2} \int_0^\infty \Bigl[ 
        \phi_k\bigl( \zeta_s + \Delta_{g_1^{(z)} + h_1^{(z)}}(\xi_s^1, \xi_s^2, w, v) 
        - \phi_k\bigl( \zeta_s + \Delta_{g_1^{(z)}}(\xi_s^1, \xi_s^2, w, v) \bigr)
    \Bigr] dv \mu_1(dw) ds\\
    & \leq \Bigl( 2 \int_{U_2} w_2 \mu_1(dw) \Bigr) \int_0^t \abs{\zeta_s} ds, \\
    \MoveEqLeft
    \int_0^t \int_{U_2} \int_0^\infty \Bigl[ 
        \phi_k\bigl( \zeta_s + \Delta_{g_2^{(z)} + h2^{(z)}}(\xi_s^1, \xi_s^2, w, v) 
        - \phi_k\bigl( \zeta_s + \Delta_{h_2^{(z)}}(\xi_s^1, \xi_s^2, w, v) \bigr)
    \Bigr] dv \mu_2(dw) ds\\
    & \leq \Bigl( 2 \int_{U_2} w_1 \mu_2(dw) \Bigr) \int_0^t \abs{\zeta_s} ds,\qquad t \in \mathbb{R}_+.
\end{align*}
Finally, by using Lemma 3.1 in \cite{liStrongSolutionsJumptype2012}, together with \eqref{aux_uni_4} and \eqref{aux_uni_5}, we get the first inequality  in 
\begin{align*}
    \MoveEqLeft
    \int_0^t \int_{U_2} \int_0^\infty \Bigl[ 
        \phi_k\bigl( \zeta_s + \Delta_{g_1^{(z)}}(\xi_s^1, \xi_s^2, w, v) \bigr)
        - \phi_k(\zeta_s) 
        - \Delta_{g_1^{(z)}}(\xi_s^1, \xi_s^2, w, v) \phi_k'(\zeta_s)
     \Bigr] dv \mu_1(dw) ds \\
     & \leq \int_0^t \int_{U_2} \int_0^\infty \frac{
        2 \bigl(g_1^{(z)}(\xi_s^1, w, v) - g_1^{(z)}(\xi_s^2, w, v)\bigr)^2
     }{k \abs{\zeta_s}} dv \mu_1(dw) ds \\
     & \leq \biggl( \frac{2}{k} \int_{U_2} \biggl(\frac{w_1}{z + w_1 + w_2}\biggr)^2 \mu_1(dw) \biggr) t,\qquad t \in \mathbb{R}_+. 
\end{align*}
Similarly, 
\begin{align*}
    \MoveEqLeft
    \int_0^t \int_{U_2} \int_0^\infty \Bigl[ 
        \phi_k\bigl( \zeta_s + \Delta_{h_2^{(z)}}(\xi_s^1, \xi_s^2, w, v) \bigr)
        - \phi_k(\zeta_s) 
        - \Delta_{h_2^{(z)}}(\xi_s^1, \xi_s^2, w, v) \phi_k'(\zeta_s)
     \Bigr] dv \mu_2(dw) ds \\
     & \leq \biggl(\frac{2}{k} \int_{U_2} \biggl( \frac{w_2}{z + w_1 + w_2} \biggr)^2 \mu_2(dw) \biggr),\qquad t \in \mathbb{R}_+.
\end{align*}
The bounds established give us the inequality, valid for $t \geq 0$,  
\[
    \phi_k(\zeta_t) 
    \leq M_t^k + K' \int_0^t \abs{\zeta_s} ds + \frac{t \tilde{K}}{k},
\]
where 
\begin{align*}
    K'
    & = K_d + 2 \int_{U_2} w_2 \mu_1(dw) + 2 \int_{U_2} w_1 \mu_2(dw) 
    + \int_{U_2} \frac{w_1 + w_2}{z + w_1 + w_2} \nu(dw), \\
    \tilde{K} 
    & = \frac{6 (c_1 + c_2)}{z} 
    + 2 \int_{U_2} \biggl(\frac{w_1}{z + w_1 + w_2}\biggr)^2 \mu_1(dw) 
    + 2 \int_{U_2} \biggl( \frac{w_2}{z + w_1 + w_2} \biggr)^2 \mu_2(dw) .
\end{align*}
By taking expectations, recalling that $M^k$ is a true martingale,
\[
    \mathbb{E}[\phi_k(\zeta_t)] \leq \mathbb{E}[\phi_k(\zeta_0)] 
    + K' \int_0^t \mathbb{E}[\abs{\zeta_s}] ds 
    + \frac{t \tilde{K}}{k} 
\]  
for any $t > 0$. 
Then, letting $k$ tend to infinity and using that $\phi_k(z) \to \abs{z}$ non-decreasingly as $k \to \infty$ we deduce, by the Monotone Convergence Theorem, 
that for any fixed $t > 0$, 
\begin{equation} \label{eq:pre_gronwall}
    \mathbb{E}[\abs{\zeta_t}] \leq \mathbb{E}[\abs{\zeta_0}] 
    + K' \int_0^t \mathbb{E}[\abs{\zeta_s}] ds.
\end{equation}
If $\xi_0^1 = r = \xi_0^2$, then we obtain, by Gronwall's inequality, that 
$\xi_t^1 = \xi_t^2$ almost surely for every $t \geq 0$, from where $\xi^1 = \xi^2$ by the 
right-continuity of the trajectories.  
Moreover, using again Gronwall's inequality, we deduce 
\eqref{eq:lipchitz_initial_condition_r} from \eqref{eq:pre_gronwall} 
by considering $\xi_0^1 = s$ and $\xi_0^2 = r$. 

\textit{Existence of a strong solution.}  
It suffices to note that for $x \in [0, 1]$ we get the bounds  
\begin{align}
    \int_{U_2} \abs{ \widetilde{g}^{(z)}(x, w) + \widetilde{h}^{(z)}(x, w) } \nu(dw) 
    & \leq \int_{U_2} \frac{w_1 + w_2}{z + w_1 + w_2} \nu(dw) \\
    \int_{U_2} \abs{ \widetilde{g}^{(z)}(x, w) + \widetilde{h}^{(z)}(x, w)}^2 \nu(dw) 
    & \leq 2 \int_{U_2} \frac{w_1 + w_2}{z + w_1 + w_2} \nu(dw), \\
    \sum_{i = 1}^2 
    \int_0^\infty \int_{U_2} \abs{g_i^{(z)}(x, w, v) + h_i^{(z)}(x, w, v)}^2 \mu_i(dw) dv 
    & \leq 4 \int_{U_2} \frac{(w_1 + w_2)^2}{z + w_1 + w_2} (\mu_1 + \mu_2)(dw).
\end{align}
Indeed, these bounds along with the definition of $\widetilde{D}$, $S$, $S_c$, $m$, 
$m_c^1$, $m_c^2$ and $\sigma$ imply the existence of a constant $K_l > 0$ such that 
for all $x \in \mathbb{R}$ we have 
\begin{align*}\MoveEqLeft
    \sigma(x)^2 + (\widetilde{D}(x) + S(x) + S_c(x) + m(x) + m_c^1(x) + m_c^2(x))^2 \\
    & + \sum_{i = 1}^2 \int_0^\infty \int_{U_2} 
    \abs{g_i^{(z)}(x, w, v) + h_i^{(z)}(x, w, v)}^2 \mu_i(dw) dv 
    + \int_{U_2} \abs{\widetilde{g}^{(z)}(x, w) + \widetilde{h}^{(z)}(x, w)}^2 \nu(dw) \\
    & + \biggl( \int_{U_2} \abs{ \widetilde{g}^{(z)}(x, w) + \widetilde{h}^{(z)}(x, w) } 
    \nu(dw) \biggr)^2 \leq K_l (1 + x^2) .
\end{align*}
Therefore, the existence of a strong solution, which is \emph{a fortiori} unique, is given by similar arguments to those in the proof of Theorem 5.1 in \cite{liStrongSolutionsJumptype2012}. 
This finishes the proof of Proposition \ref{prop:important_uniqueness_existence}. 
\end{proof}

To use the culling approach we will follow closely the methodology described in 
\cite{caballeroRelativeFrequencyTwo2024}, where the convergence to the limiting 
process $R^{(z, r)}$ is proved using the convergence of infinitesimal generators.  
Hence, we will need $R^{(z, r)}$ to be Feller. 
This is the content of the next result.

\begin{proposition} \label{prop:feller_property}
    For any given $z > 0$, the process $R^{(z, r)}$ is Feller and its infinitesimal 
    generator $\mathcal{L}^{(z)}$ acts on $f \in C^2([0, 1])$ as follows:
    \begin{align}
        \mathcal{L}^{(z)} f(r) = {} 
        & \bigl[ \widetilde{D}(r) + S(r) + m(r) \bigr] f'(r) 
        + \frac{1}{2} \sigma(r)^2 f''(r) \label{eq:ratio_generator} \\
        & + rz \int_{[0, 1]^2} \biggl( f \bigl( r + (1 - r) u_1 - r u_2 \bigr) - f(r) 
        - \frac{(1 - r) u_1}{1 - u_1 - u_2} f'(r) \biggr) \mathbf{T}_2^{(z)} \mu_1(du) \\
        & + (1 - r) z \int_{[0, 1]^2} \biggl( f \bigl( r + (1 - r) u_1 - r u_2 \bigr) 
        - f(r) + \frac{r u_2}{1 - u_1 - u_2} f'(r) \biggr) \mathbf{T}_2^{(z)} \mu_2(du) \\
        & + \int_{[0, 1]^2} \Bigl( f \bigl( r + (1 - r) u_1 - r u_2 \bigr) - f(r) \Bigr) 
        \mathbf{T}_2^{(z)}\nu(du) ,
    \end{align}
    where $\mathbf{T}_2^{(z)} (\cdot)$ is defined in \eqref{eq:t2Def}.
\end{proposition}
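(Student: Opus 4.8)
The plan is to establish the two claims in turn. The Feller property will follow from the continuity-in-the-initial-condition estimate of Proposition \ref{prop:important_uniqueness_existence}, while the form of $\mathcal{L}^{(z)}$ is read off from It\^o's formula after an algebraic rearrangement that is the real content of the statement. Write $P_t f(r) = \mathbb{E}[f(R_t^{(z,r)})]$ for $f \in C([0,1])$; since the solution stays in $[0,1]$ without being killed, $(P_t)_{t\ge 0}$ is a conservative Markov semigroup, hence a sup-norm contraction. From \eqref{eq:lipchitz_initial_condition_r}, with $K(t) = e^{K't}$ bounded on compact time intervals (as produced by the Gronwall step above), every Lipschitz $f$ satisfies $\abs{P_t f(r) - P_t f(s)} \le C_f\, K(t)\,\abs{r-s}$, where $C_f$ is the Lipschitz constant of $f$; thus $\{P_t f : t \in [0,T]\}$ is equi-Lipschitz. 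Because Lipschitz functions are dense in $C([0,1])$ and $P_t$ is a contraction, $P_t$ maps $C([0,1])$ into itself. For the strong continuity at $0$, the c\`adl\`ag solution satisfies $R_t^{(z,r)} \to r$ almost surely as $t \downarrow 0$, so $P_t f(r) \to f(r)$ pointwise by bounded convergence; the equi-Lipschitz bound upgrades this to uniform convergence on the compact $[0,1]$ for Lipschitz $f$, and then by density for all $f \in C([0,1])$. This yields the Feller property.

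For the generator, apply It\^o's formula to $f(R_t^{(z,r)})$ with $f \in C^2([0,1])$ and the SDE \eqref{eq:sde_r_general}, and separate the martingale part (the $dB$-integral together with the integrals against $\widetilde{N}_3$, $\widetilde{N}_1$ and $\widetilde{N}_2$, which are genuine martingales since $f$ and $f'$ are bounded on $[0,1]$, the jump sizes are bounded by $(w_1+w_2)/(z+w_1+w_2) \le 1$, and the integrability conditions of Section \ref{sec:preliminaries} hold) from the finite-variation part. The drift of $f(R_t^{(z,r)})$ then equals $\mathcal{A}f(R_{t}^{(z,r)})$ with
\begin{align*}
\mathcal{A} f(r) = {} & \bigl[ \widetilde{D} + S + S_c + m + m_c^1 + m_c^2 \bigr](r)\, f'(r) + \tfrac{1}{2} \sigma(r)^2 f''(r) \\
& + \int_{U_2} \bigl[ f\bigl(r + \widetilde{g}^{(z)}(r,w) + \widetilde{h}^{(z)}(r,w)\bigr) - f(r) \bigr]\, \nu(dw) \\
& + \sum_{i=1}^2 \int_{U_2} \int_0^\infty \bigl[ f\bigl(r + g_i^{(z)}(r,w,v) + h_i^{(z)}(r,w,v)\bigr) - f(r) \\
& \hspace{2.2cm} - \bigl(g_i^{(z)} + h_i^{(z)}\bigr)(r,w,v)\, f'(r) \bigr]\, dv\, \mu_i(dw).
\end{align*}
The standard identity $\mathcal{L}^{(z)} f(r) = \lim_{t \downarrow 0} t^{-1}(P_t f(r) - f(r))$, combined with right-continuity of the paths and dominated convergence, gives $\mathcal{L}^{(z)} f = \mathcal{A} f$, so it only remains to rewrite $\mathcal{A} f$ in the stated form.

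In each $\mu_i$-integral the indicator built into $g_i^{(z)}, h_i^{(z)}$ makes the integrand vanish once $v$ exceeds $rz$ (for $i=1$) or $(1-r)z$ (for $i=2$), so the $dv$-integration produces the prefactors $rz$ and $(1-r)z$ and turns $g_i^{(z)} + h_i^{(z)}$ into $(1-r)u_1 - r u_2$ with $u = T_{z,2}(w)$. The one non-routine point is the first-order correction: It\^o supplies the compensator coefficient $(1-r)u_1 - r u_2$, whereas \eqref{eq:ratio_generator} carries $\tfrac{(1-r)u_1}{1-u_1-u_2}$ for $\mu_1$ and $-\tfrac{r u_2}{1-u_1-u_2}$ for $\mu_2$. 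Using $1 - u_1 - u_2 = z/(z+w_1+w_2)$, so that $u_1/(1-u_1-u_2) = w_1/z$ and $u_2/(1-u_1-u_2) = w_2/z$, I would verify the algebraic identity that the difference between the two compensators, multiplied by the respective prefactor $rz$ or $(1-r)z$ and integrated against $\mu_i$, reproduces exactly $(S_c + m_c^1 + m_c^2)(r)\,f'(r)$. Substituting this identity cancels the $S_c, m_c^1, m_c^2$ terms carried by the drift in $\mathcal{A}f$, leaving $[\widetilde{D} + S + m]f'$, and a change of variables through $\mathbf{T}_2^{(z)}$ rewrites all three jump integrals over $[0,1]^2$, producing \eqref{eq:ratio_generator}.

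The main obstacle is precisely this last reconciliation: confirming that the drift corrections $S_c, m_c^1, m_c^2$ introduced in \eqref{eq:sdeTerms} are exactly those needed to convert the fully compensated It\^o integrals into the partially compensated integrals of $\mathcal{L}^{(z)}$, in which only the same-type component ($w_1$ for $\mu_1$, $w_2$ for $\mu_2$) is compensated. This is consistent with the bounded variation of $\mu_i$ in the cross direction, which is what guarantees that the resulting $[0,1]^2$-integrals converge. The remaining steps (the $dv$-integration, the change of variables through $\mathbf{T}_2^{(z)}$, and the justification that the martingale terms have zero expectation) are routine given the integrability assumptions.
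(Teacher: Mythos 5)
Your proposal is correct and follows essentially the same route as the paper: the Lipschitz-in-initial-condition estimate \eqref{eq:lipchitz_initial_condition_r} from Proposition \ref{prop:important_uniqueness_existence} yields that the semigroup maps $C([0,1])$ into itself and (together with path right-continuity) strong continuity, while It\^o's formula applied to \eqref{eq:sde_r_general} identifies the generator — exactly the two steps the paper carries out, partly by deferring to Proposition 4.1 of the cited reference. The reconciliation you flag as the main obstacle is indeed the crux and your claimed identity is correct: since $u_i/(1-u_1-u_2) = w_i/z$ under $u = T_{z,2}(w)$, the difference between the fully compensated It\^o compensators and the partially compensated ones in \eqref{eq:ratio_generator} integrates against $\mu_1, \mu_2$ to precisely $-(S_c + m_c^1 + m_c^2)(r) f'(r)$, cancelling those drift terms from \eqref{eq:sdeTerms}; this is the same cancellation the paper records in the proof of Proposition \ref{prop:important_uniqueness_existence} (where the $\Delta_{h_1^{(z)}}$ and $\Delta_{g_2^{(z)}}$ compensator terms are shown to absorb $m_c^1, m_c^2$).
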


\begin{proof}
    We define $T_t f(r) := \mathbb{E}[f(R_t^{(z, r)})]$. The fact that the family $\{T_t : t \geq 0\}$ forms a semigroup such that $T_t(C[0, 1]) \subset C([0, 1])$ 
    for all $t \geq 0$, follows the same lines as the first part of the proof of Proposition 4.1 in \cite{caballeroRelativeFrequencyTwo2024}.

    By Itô's formula we deduce, see Proposition \ref{prop:rz_generator}, that 
    for any $f \in C^2([0, 1])$, 
    \[
        f(R_t^{(z, r)}) = f(r) + \int_0^t \mathcal{L}^{(z)}(R_s^{(z, r)}) ds + M_t^f,
    \]
    where $M^f$ is a local martingale. 
    By using the fact that $f \in C^2([0, 1])$, it is readily seen that 
    \[
        \sup_{r \in [0, 1]} \biggl\lvert \bigl[ \widetilde{D}(r) + S(r) + m(r) \bigr] f'(r) + \frac{1}{2} \sigma(r)^2 f''(r) \biggr\rvert < \infty.
    \]
    On the other hand, for any $r \in [0, 1]$ and $u_1, u_2 \in [0, 1]$ such that $u_1 + u_2 < 1$, standard arguments reveal that 
    \begin{align*}
        \Bigl\lvert f \bigl( r + u_1 (1 - r) - u_2 r \bigr) - f(r) \Bigr\rvert 
        & \leq (u_1 + u_2) \norm{f'}_\infty, \\
        \biggl\lvert f \bigl(r + u_1 (1 - r) - u_2 r \bigr) - f(r) - \frac{(1 - r) u_1}{1 - u_1 - u_2} f'(r) \biggr\rvert 
        & \leq \norm{u}^2 \norm{f''}_\infty + \frac{u_1^2 + u_1 u_2}{1 - u_1 - u_2} \norm{f'}_\infty + u_2 \norm{f'}_\infty, \\
        \biggl\lvert f \bigl(r + u_1 (1 - r) - u_2 r\bigr) - f(r) + \frac{r u_2}{1 - u_1 - u_2} f'(r) \biggr\rvert
        & \leq \norm{u}^2 \norm{f''}_\infty + \frac{u_1 u_2 + u_2^2}{1 - u_1 - u_2} \norm{f'}_\infty + u_1 \norm{f'}_\infty,
    \end{align*}
    from where we deduce 
    the existence of a constant $K > 0$ such that 
    \[
        \sup_{r \in [0, 1]} \abs{ \mathcal{L}^{(z)} f(r) } \leq K.
    \]
    Now the rest of the proof follows as in Proposition 4.1 in \cite{caballeroRelativeFrequencyTwo2024}.
\end{proof}

Just before describing the culling procedure, let us remark that the generator given in 
Proposition \ref{prop:feller_property} coincides with the form of the generator in 
\eqref{eq:rz_generator} when $f$ only depends on $r$ and $z$ is held constant. 
Hence, the process $R^{(z, r)}$ is seen to describe the dynamics of the ratio process $R$ 
under the assumption that the total population stays constant through time. 

\subsection{Culling} 
\label{subsec:culling}
We now turn to show that we can recover the dynamics of $R$ with a Markovian process 
by applying the culling method, which we recall is a sampling technique, 
inspired by the works of Gillespie 
\cite{gillespieNalturalSelectionWithingeneration1974,gillespieNaturalSelectionWithingeneration1975}, 
that takes advantage of the Markovianity of $(R, Z)$ and restarts the process with 
total population $z > 0$ at every sample time. 

Before going into the technical aspect of the process, let us describe the underlying idea behind it. 
For a fixed $n \in \mathbb{N}$ we let the process 
$\{(R_{t \wedge \tau}, Z_{t \wedge \tau}) : t \geq 0\}$ evolve starting from the 
point $(r, z)$, where $\tau$ is defined in Section \ref{subsec:totalPopRatioProcess}. 
At time $t = 1/n$ we sample the first coordinate $R_{n^{-1} \wedge \tau}$, 
which serves as the value to which a continuous-time Markov chain $\overline{R}^{(z, n)}$ jumps to. 
Then, by using that $(R, Z)$ is a homogeneous Markov process, we restart the 
process $\{(R_{t \wedge \tau}, Z_{t \wedge, \tau})\}$ from the point 
$(R_{n^{-1} \wedge \tau}, z)$ by culling (or growing) the population accordingly. 
The procedure just described is then followed inductively. 
Hence, we construct a collection of continuous-time pure jump processes $\{\overline{R}^{(z, n)} : n \in \mathbb{N}\}$ such that for a fixed $n \in \mathbb{N}$, the trajectory $t \mapsto \overline{R}_t^{(z, n)}$ follows the first coordinate of the bivariate process $(R, Z)$, which is observed discretely, and with the property that as $n$ tends to infinity, the fluctuations of $Z$ around $z$ become zero in a dense set of times over $\mathbb{R}_+$. 

Formally, for every $n \in \mathbb{N}$ we define a pure-jump Markov process 
$\overline{R}^{(z, n)}$ with jump times $\{T_m^{(n)} : m \geq 1\}$ given by 
independent exponentially distributed random variables with common rate $n$. 
We denote the law of the process started at $r \in [0, 1]$ by $P_r$. 
The transition kernel $\rho^{(z, n)} : [0, 1] \times \mathcal{B}([0, 1]) \to [0, 1]$ 
of the process will be given by 
\[
    \rho^{(z, n)}(r, A) \equiv P_r \bigl( \overline{R}_{T_1^{(n)}}^{(z, n)} \in A \bigr)
    := \mathbb{P}_{(r, z)} \bigl( R_{n^{-1} \wedge \tau} \in A, 
    Z_{n^{-1} \wedge \tau} \in \mathbb{R}_+ \bigr) ,
\] 
where $\mathbb{P}_{(r, z)}$ stands for the law of $(R, Z)$ started in $(r, z)$. 
With these specifications the infinitesimal generator $\mathcal{A}^{(z, n)}$ of 
$\overline{R}^{(z, n)}$ is bounded, 
see Proposition 17.2 in \cite{kallenbergFoundationsModernProbability2021}, 
and for $f \in C([0, 1])$ we get 
\begin{equation} \label{eq:boundedGen}
    \mathcal{A}^{(z, n)} f(r) = n \int_{[0, 1]} \bigl( f(y) - f(r) \bigr) 
    \rho^{(z, n)}(r, dy).
\end{equation}
Using this process we can obtain an analogous result to Theorem 4.1 of 
\cite{caballeroRelativeFrequencyTwo2024}, which states that the sequence of processes 
$R^{(z, n)}$ converges weakly to the unique solution of \eqref{eq:sde_r_general}.  

\begin{theorem} \label{the:convergence_culling}
    For any fixed $z > 0$ and $T > 0$, $\overline{R}^{(z, n)} \Rightarrow R^{(z, r)}$ 
    as $n \to \infty$ whenever $\overline{R}_0^{(z, n)} \Rightarrow R_0^{(z, r)} = r$ 
    as $n \to \infty$ in 
    $D([0, T], [0, 1])$ with the $J_1$ Skorokhod topology.
\end{theorem}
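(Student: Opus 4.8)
The plan is to prove the weak convergence $\overline{R}^{(z,n)} \Rightarrow R^{(z,r)}$ via the standard generator-convergence route for Feller processes, following the template of Theorem 4.1 in \cite{caballeroRelativeFrequencyTwo2024}. The two ingredients are: (i) the limit $R^{(z,r)}$ is Feller with generator $\mathcal{L}^{(z)}$ acting on a suitable core (this is Proposition \ref{prop:feller_property}, which I may assume); and (ii) the bounded generators $\mathcal{A}^{(z,n)}$ of the approximating chains converge to $\mathcal{L}^{(z)}$ in the sense that $\mathcal{A}^{(z,n)} f \to \mathcal{L}^{(z)} f$ uniformly on $[0,1]$ for every $f$ in a core, say $f \in C^2([0,1])$. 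Given these, the Trotter--Kurtz semigroup convergence theorem (together with convergence of initial distributions $\overline{R}_0^{(z,n)} \Rightarrow r$) yields convergence of finite-dimensional distributions, and an Aldous-type tightness criterion upgrades this to weak convergence in $D([0,T],[0,1])$ under $J_1$.

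\emph{First} I would fix $f \in C^2([0,1])$ and analyze $\mathcal{A}^{(z,n)} f(r) = n\, \mathbb{E}_{(r,z)}\bigl[ f(R_{n^{-1}\wedge\tau}) - f(r) \bigr]$ using \eqref{eq:boundedGen} and the definition of the transition kernel $\rho^{(z,n)}$. The key identity is that $(R,Z)$ is the Markov process whose generator is $\mathcal{L}$ from Proposition \ref{prop:rz_generator}; by Itô's formula applied to $\widetilde f(r,z) := f(r)$ (a function depending only on the first coordinate), the local martingale decomposition gives
\[
    f(R_{n^{-1}\wedge\tau}) - f(r) = \int_0^{n^{-1}\wedge\tau} \mathcal{L}\widetilde f(R_s, Z_s)\, ds + M_{n^{-1}\wedge\tau},
\]
so that multiplying by $n$ and taking expectations produces $\mathcal{A}^{(z,n)} f(r) = n\, \mathbb{E}_{(r,z)}\bigl[ \int_0^{n^{-1}\wedge\tau} \mathcal{L}\widetilde f(R_s, Z_s)\, ds \bigr]$. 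As $n \to \infty$ the averaging interval shrinks, $Z_s \to z$ and $R_s \to r$ by right-continuity, and the stopping time $\tau$ becomes irrelevant since $n^{-1} \wedge \tau = n^{-1}$ eventually on the event $\{Z_0 = z \in (\varepsilon, L)\}$; hence $n\, \mathbb{E}_{(r,z)}[\cdots] \to \mathcal{L}\widetilde f(r,z) = \mathcal{L}^{(z)} f(r)$, where the last equality is exactly the remark after Proposition \ref{prop:feller_property} that $\mathcal{L}$ restricted to first-coordinate functions with $z$ held fixed equals $\mathcal{L}^{(z)}$.

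\emph{Next}, to pass from pointwise to uniform convergence $\sup_{r\in[0,1]} \lvert \mathcal{A}^{(z,n)} f(r) - \mathcal{L}^{(z)} f(r) \rvert \to 0$, I would use the uniform bounds on the integrand $\mathcal{L}\widetilde f$ established in the proof of Proposition \ref{prop:feller_property} (the estimate $\sup_r \lvert \mathcal{L}^{(z)} f(r) \rvert \le K$, together with the continuity estimates on the jump terms) to dominate the error and apply a dominated-convergence argument uniformly in $r$. This is the standard step where one shows the rescaled expectation is a genuine approximation to the generator. With uniform generator convergence on the core $C^2([0,1])$, and since $R^{(z,r)}$ is Feller with this core dense in the domain of $\mathcal{L}^{(z)}$, the Trotter--Kurtz theorem (Ethier--Kurtz, Thm.~1.6.5) gives convergence of the semigroups, hence of finite-dimensional distributions.

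\emph{The hard part} will be tightness in the $J_1$ topology, since generator convergence alone only controls finite-dimensional laws. I would verify Aldous' criterion: using the bounded-generator representation, $f(\overline{R}^{(z,n)}_t) - \int_0^t \mathcal{A}^{(z,n)} f(\overline{R}^{(z,n)}_s)\, ds$ is a martingale, and the uniform bound $\sup_n \sup_r \lvert \mathcal{A}^{(z,n)} f(r) \rvert < \infty$ (for $f = \mathrm{id}$ and $f = \mathrm{id}^2$) controls the drift and quadratic-variation increments over small time intervals, yielding $\mathbb{E}\bigl[ \lvert \overline{R}^{(z,n)}_{\tau_n + \delta} - \overline{R}^{(z,n)}_{\tau_n} \rvert \mid \mathcal{F}_{\tau_n} \bigr] \le C\delta$ uniformly over stopping times $\tau_n \le T$ and $n$. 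Since the state space $[0,1]$ is compact, compact containment is automatic, so Aldous' condition implies tightness. The one subtlety worth flagging is that the approximating processes $\overline{R}^{(z,n)}$ are pure-jump chains whereas the limit has a Brownian component, so $J_1$-tightness must be argued carefully through the modulus-of-continuity control the martingale estimates provide, rather than through any pathwise continuity. Combining tightness with the identified finite-dimensional limits completes the proof, exactly paralleling Theorem 4.1 of \cite{caballeroRelativeFrequencyTwo2024}.
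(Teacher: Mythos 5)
Your proposal is correct in substance, and its core computation is exactly the paper's: write $\mathcal{A}^{(z,n)}f(r) = n\,\mathbb{E}_{(r,z)}[f(R_{n^{-1}\wedge\tau})-f(r)]$, apply Dynkin's formula for $(R,Z)$ to the function $\widetilde f(r,z)=f(r)$, use the uniform bound on the integrand on $[0,1]\times[\varepsilon,L]$ (valid before $\tau$) to make the local martingale a true martingale, and conclude by dominated convergence that $\mathcal{A}^{(z,n)}f(r)\to\mathcal{L}^{(z)}f(r)$. Where you diverge from the paper is in how the conclusion is assembled. The paper invokes Theorem 17.28 of Kallenberg, which is tailored to this situation (approximation of a Feller process by pure-jump Markov chains with bounded generators): uniform generator convergence on a core, plus convergence of initial laws, yields weak convergence in $D([0,T],[0,1])$ directly, with tightness built into the theorem. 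You instead split the conclusion into Trotter--Kurtz semigroup convergence (finite-dimensional distributions) plus a hand-made verification of Aldous' criterion. Your route is more self-contained but costs extra work and, strictly speaking, requires the approximating chains themselves to act nicely on $C([0,1])$ for the semigroup theorem, a point Kallenberg's formulation sidesteps; what it buys is independence from a fairly specialized citation.

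Two soft spots worth tightening. First, your passage from pointwise to uniform convergence of generators (``a dominated-convergence argument uniformly in $r$'') is not an argument: domination gives pointwise convergence only. The paper resolves this with Lemma 31.7 of Sato, i.e.\ the equivalence of continuous convergence and uniform convergence on a compact state space; you would need to show $\mathcal{A}^{(z,n)}f(r_n)\to\mathcal{L}^{(z)}f(r)$ along $r_n\to r$, or argue equicontinuity. Second, in the Aldous step the bound $\mathbb{E}\bigl[\lvert \overline{R}^{(z,n)}_{\tau_n+\delta}-\overline{R}^{(z,n)}_{\tau_n}\rvert \mid \mathcal{F}_{\tau_n}\bigr]\le C\delta$ does not follow from the generator bound for $f=\mathrm{id}$ alone (that controls only the conditional mean, which is signed); the correct statement is the second-moment bound $\mathbb{E}\bigl[(\overline{R}^{(z,n)}_{\tau_n+\delta}-\overline{R}^{(z,n)}_{\tau_n})^2 \mid \mathcal{F}_{\tau_n}\bigr]\le C\delta$, obtained from the martingale problems for $f=\mathrm{id}$ and $f=\mathrm{id}^2$ together, followed by Chebyshev. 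Both issues are repairable with the ingredients you already have, so I would classify them as imprecisions rather than gaps.
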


The proof of the previous result follows by a minor modification of the proof of Theorem 4.1 in \cite{caballeroRelativeFrequencyTwo2024}. 
We include the proof for completeness.

\begin{proof}
    We start by noting that $[0, 1]$ is compact and, by Proposition \ref{prop:feller_property}, $R^{(z, r)}$ is Feller. 
    Therefore, due to Theorem 17.28 in \cite{kallenbergFoundationsModernProbability2021}, it is enough to prove that, 
    for any $f \in C^2([0, 1])$, 
    \[
        \mathcal{A}^{(z, n)} f(r) \to \mathcal{L}^{(z)} f(r) 
        \quad
        \text{uniformly for $r \in [0, 1]$ as $n \to \infty$,}
    \]
    where $\mathcal{L}^{(z)}$ is the generator defined in \eqref{eq:ratio_generator}.
    By definition of $\rho^{(z, n)}$ and \eqref{eq:boundedGen} we obtain 
    \begin{equation} \label{eq:genAsMean}
        \mathcal{A}^{(z, n)} f(r) = n \Bigl( \mathbb{E}_{(r, z)} \bigl[ f(R_{n^{-1} \wedge \tau}) \bigr] - f(r) \Bigr).
    \end{equation}

    Due to Proposition \ref{prop:rz_generator} we know that, for any $f \in C^2([0, 1])$, we have the equality 
    \begin{equation} \label{eq:dynkinApp}
        f(R_{n^{-1} \wedge \tau}) - f(r)
        = \int_0^{n^{-1} \wedge \tau} \mathcal{G} f(R_s, Z_s) ds 
        + M_{n^{-1} \wedge \tau}^f,
    \end{equation}
    where $M^f$ is a local martingale and 
    \begin{align*}
        \mathcal{G} f(r, z) = {}
        & \bigl[ \widetilde{D}(r) + S(r) + m(r) \bigr] f'(r) 
        + \frac{1}{2} \sigma(r)^2 f''(r) \\
        & + rz \int_{U_2} \biggl( 
            f \biggl( r + (1 - r) \frac{w_1}{z + w_1 + w_2} - r \frac{w_2}{z + w_1 + w_2} \biggr)
            - f(r)
            - (1 - r) \frac{w_1}{z} f'(r)
         \biggr) \mu_1(dw) \\
         & + (1 - r) z \int_{U_2} \biggl( 
            f \biggl( r + (1 - r) \frac{w_1}{z + w_1 + w_2} - r \frac{w_2}{z + w_1 + w_2} \biggr)
            - f(r)
            + r \frac{w_2}{z} f'(r)
         \biggr) \mu_2(dw) \\
         & + \int_{U_2} \biggl( 
            f \biggl( r + (1 - r) \frac{w_1}{z + w_1 + w_2} - r \frac{w_2}{z + w_1 + w_2} \biggr)
            - f(r)
         \biggr) \nu(dw) . 
    \end{align*}
    Therefore, as $f \in C^2([0, 1])$ and the fact that, by the definition of $\tau$, $(R_s, Z_s) \in [0, 1] \times [\varepsilon, L]$ for all $s \in [0, n^{-1} \wedge \tau)$, by similar arguments to those given in the proof of Proposition \ref{prop:feller_property}, we deduce the existence of a constant $K_f > 0$ such that 
    \begin{equation} \label{eq:uniformBoundForGenerator}
        \abs{\mathcal{G} f(R_s, Z_s)} \leq K_f \quad \text{for all $s \in [0, n^{-1} \wedge \tau)$} \quad \text{$\mathbb{P}$-a.s.}
    \end{equation}
    This entails that $\{M_{t \wedge \tau}^f : t \in \mathbb{R}_+\}$ is bounded, and thus a martingale that starts in $0$. 

    Hence, taking expectation with respect to $\mathbb{P}_{(r, z)}$ in \eqref{eq:dynkinApp} and using \eqref{eq:genAsMean}, 
    \[
        \mathcal{A}^{(z, n)} f(r) 
        = \mathbb{E}_{r, z} \Bigl[ n \int_0^{n^{-1} \wedge \tau} \mathcal{G} f(R_s, Z_s) ds \Bigr] .
    \]
    Now, by the bound \eqref{eq:uniformBoundForGenerator}, 
    \[
        \Bigl\lvert n \int_0^{n^{-1} \wedge \tau} \mathcal{G} f(R_s, Z_s) ds \Bigr\rvert \leq K_f \quad\text{with whole probability.}
    \]
    Then, we may apply the Dominated Convergence Theorem to deduce  
    \[
        \lim_{n \to \infty} \mathcal{A}^{(z, n)} f(r) 
        = \lim_{n \to \infty} \mathbb{E}_{(r, z)} \Bigl[ n \int_{0}^{n^{-1} \wedge \tau} \mathcal{G} f(R_s, Z_s) ds \Bigr] 
        = \mathcal{G}f(r, z) = \mathcal{L}^{(z)} f(r) .
    \] 
    The uniform convergence follows by Lemma 31.7 of 
    \cite{satoLevyProcessesInfinitely2013}, concluding the proof.
\end{proof}

\section{Moment duality}
\label{sec:duality}

The objective of the section is to present a process that will serve as a moment dual to the culled frequency process defined in section \ref{sec:ratio_process}. 
This will be a block-counting process where blocks are allowed to branch and coalesce, where the branching in the dual process is due to selection and coalescence that arises as a result of the asymmetry in the reproductive mechanisms that we consider in the two-type CBI with competition. 

\subsection{Block-counting process} 
To define the transition rates of the block counting process we will consider, 
let us establish further notation for convenience.  
For $n \in \mathbb{N}$, $k \in \{1, 2, 3, \ldots, n\}$ and $i \in \{1, 2\}$ set 
\begin{align*}
    \lambda_{n, k}^i := {}
    & \int_{[0, 1]^2} u_1^k (1 - u_1 - u_2)^{n-k} \mathbf{T}_2^{(z)} \mu_i(du), \\
    \lambda_{n, k}^I := {} 
    & \int_{[0, 1]^2} u_1^k (1 - u_1 - u_2)^{n-k} \mathbf{T}_2^{(z)} \nu(du). 
\end{align*}
In addition, let us define 
\begin{align*}
    \gamma_n^1 := {} 
    & \int_{[0, 1]^2} \biggl( 1 - (1 - u_1 - u_2)^n - n \frac{u_1}{1 - u_1 - u_2} \biggr) 
    \mathbf{T}_2^{(z)} \mu_1(du), \\
    \gamma_n^2 := {} 
    & \int_{[0, 1]^2} \biggl( 1 - (1 - u_1 - u_2)^n 
    - n \frac{u_2}{1 - u_1 - u_2} \biggr) \mathbf{T}_2^{(z)} \mu_2(du), \\
    \vartheta_{n, \mu} := {}
    & \int_{[0, 1]^2} \bigl( 1 - (1 - u_2)^n \bigr) \mathbf{T}_2^{(z)} \mu_1(du) , \\
    \vartheta_{n, \nu} := {}
    & \int_{[0, 1]^2} \bigl( 1 - (1 - u_2)^n \bigr) \mathbf{T}_2^{(z)} \nu(du) .
\end{align*}
We aim to deduce moment duality, which is why we will consider the generator 
$\mathcal{L}^{(z)}$ in Proposition \ref{prop:feller_property} applied to monomials 
of the form $f(r) = r^n$. 
Therefore we will need to assume that the functions $b_{ij}$ are polynomials, this is 
\begin{equation} \label{eq:assump_poly}
    b_{ij}(x) = \sum_{k = 0}^{m_{ij}} a_{ij}^{(k)} x^k, 
    \quad i, j \in \{1, 2\}. \tag{AP}
\end{equation}

Suppose now that $n \in \mathbb{N}$, $r\in[0,1]$, and $f(r) = r^n$. 
Then we get 
\[
    f'(r) = n r^{n-1} \quad\text{and}\quad f''(r) = 2 \binom{n}{2} r^{n-2},
\]
where we consider $\binom{1}{2} = 0$. 
Thus, by using \eqref{eq:sdeTerms} we deduce 
\[
    \frac{1}{2} \sigma(r)^2 f''(r) 
    = \binom{n}{2} \frac{2}{z} \bigl[ c_1 (r^{n-1} - r^n) + (c_1 - c_2) (r^{n + 1} - r^n) \bigr].
\]
Using $S$ as in \eqref{eq:sdeTerms} we get 
\[
    S(r) f'(r) = n \frac{2}{z} (c_1 - c_2) (r^{n + 1} - r^n),
\]
so 
\begin{align}\MoveEqLeft \label{eq:selection_duality}
    \frac{1}{2} \sigma(r)^2 f''(r) + S(r) f'(r) \\
    & = \binom{n}{2} \frac{2}{z} c_1 (r^{n - 1} - r^n)
    + \binom{n + 1}{2} \frac{2}{z} (c_1 - c_2) (r^{n + 1} - r^n) .
\end{align}
On the other hand, considering $m$ as in \eqref{eq:sdeTerms}, 
\begin{equation} \label{eq:mutation_duality}
    m(r) f'(r) = n \frac{\eta_1}{z} (r^{n-1} - r^n) - n \frac{\eta_2}{z} r^n.
\end{equation}

Let us now consider the term $\widetilde{D}(r) f'(r)$, with $\widetilde{D}$ defined in \eqref{eq:sdeTerms}. 
We start by noting that 
\begin{align*} \MoveEqLeft
    \frac{1 - r}{z} b_{11}(zr) n r^{n-1} \\
    & = n \sum_{k = 0}^{m_{11}} a_{11}^{(k)} z^{k - 1} (r^{n + k - 1} - r^{n + k}) \\
    & = n \frac{a_{11}^{(0)}}{z} (r^{n-1} - r^n) 
    + n \sum_{k = 2}^{m_{11}} a_{11}^{(k)} z^{k - 1} (r^{n + k - 1} - r^n)
    - n \sum_{k = 1}^{m_{11}} a_{11}^{(k)} z^{k - 1} (r^{n + k} - r^n) \\
    & = n \frac{a_{11}^{(0)}}{z} (r^{n-1} - r^n) 
    + n \sum_{k = 1}^{m_{11} - 1} \bigl( a_{11}^{(k + 1)} z^k - a_{11}^{(k)} z^{k - 1} \bigr) (r^{n + k} - r^n) 
    - n a_{11}^{(m_{11})} z^{m_{11} - 1} (r^{n + m_{11}} - r^n).
\end{align*}
Now we note that 
\begin{align*} \MoveEqLeft
    - \frac{r}{z} b_{22}\bigl(z (1 - r)\bigr) n r^{n-1} \\
    & = n \sum_{l = 0}^{m_{22}} (-a_{22}^{(l)} z^{l - 1}) \sum_{k = 0}^l \binom{l}{k} (-1)^k r^{n + k} 
    = n \sum_{k = 0}^{m_{22}} (-1)^k \bigl[ \sum_{l = k}^{m_{22}} \binom{l}{k} (-a_{22}^{(l)} z^{l - 1}) \bigr] r^{n + k} \\
    & = n \sum_{k = 0}^{m_{22}} (-1)^k \bigl[ \sum_{l = k}^{m_{22}} \binom{l}{k} (-a_{22}^{(l)} z^{l - 1}) \bigr] (r^{n + k} - r^n) 
    + n \sum_{k = 0}^{m_{22}} (-1)^k \bigl[ \sum_{l = k}^{m_{22}} \binom{l}{k} (-a_{22}^{(l)} z^{l - 1}) \bigr] r^{n} \\
    & = n \sum_{k = 1}^{m_{22}} (-1)^{k + 1} \bigl[ \sum_{l = k}^{m_{22}} \binom{l}{k} a_{22}^{(l)} z^{l-1} \bigr] (r^{n + k} - r^n) 
    - n r^n \sum_{l = 0}^{m_{22}} a_{22}^{(l)} z^{l - 1} \sum_{k = 0}^{l} \binom{l}{k} (-1)^k \\
    & = n \sum_{k = 1}^{m_{22}} (-1)^{k + 1} \bigl[ \sum_{l = k}^{m_{22}} \binom{l}{k} a_{22}^{(l)} z^{l-1} \bigr] (r^{n + k} - r^n) 
    - n \frac{a_{22}^{(0)}}{z} r^n.
\end{align*}
Similarly, we obtain 
\begin{align*}
    - \frac{r}{z} b_{21}(rz) n r^{n-1} 
    & = \sum_{k = 1}^{m_{21}} n (-a_{21}^{(k)}) z^{k-1} (r^{n + k} - r^n) 
    - n \frac{b_{21}(z)}{z} r^n,
\end{align*}
and 
\begin{align*}
    \frac{1 - r}{z} b_{12} \bigl( z (1 - r) \bigr) n r^{n-1} = {}
    & n \frac{b_{12}(z)}{z} (r^{n-1} - r^n) \\
    & + n \sum_{k = 1}^{m_{12} - 1} (-1)^{k + 1} \bigl[ a_{12}^{(k)} z^{k - 1} + \sum_{l = k + 1}^{m_{12}} \binom{l + 1}{k + 1} a_{12}^{(l)} z^{l - 1} \bigr] (r^{n + k} - r^n) \\
    & + n (-1)^{m_{12} + 1} a_{12}^{(m_{12})} z^{m_{12} - 1} (r^{n + m_{12}} - r^n) .
\end{align*}
Thus, by defining for $k \in \mathbb{N}$,
\begin{align*}
    \theta_k := {} 
    & a_{11}^{(k + 1)} z^k 1_{\{k < m_{11}\}} 
    - a_{11}^{(k)} z^{k - 1} 1_{\{k \leq m_{11}\}} 
    - a_{21}^{(k)} z^{k - 1} 1_{\{k \leq m_{21}\}} 
    + \biggl[ (-1)^{k + 1} \sum_{l = k}^{m_{22}} \binom{l}{k} a_{22}^{(l)} z^{l - 1} \biggr] 1_{\{k \leq m_{22}\}} \\
    & + (-1)^{k + 1} \biggl[ \sum_{l = k + 1}^{m_{12}} \binom{l + 1}{k + 1} a_{12}^{(l)} z^{l - 1} \biggr] 1_{\{k < m_{12}\}} 
    + (-1)^{k + 1} a_{12}^{(k)} z^{k - 1} 1_{\{k \leq m_{12}\}} ,
\end{align*}
and employing the definition of $\widetilde{D}$ in \eqref{eq:sdeTerms} we deduce 
\begin{equation} \label{eq:malthusian_duality}
    \widetilde{D}(r) f'(r) 
    = n \frac{a_{11}^{(0)} + b_{12}(z)}{z} (r^{n - 1} - r^n) 
    + \sum_{k = 1}^{\tilde{m}} n \theta_k (r^{n + k} - r^n) 
    - n \frac{a_{22}^{(0)} + b_{21}(z)}{z} r^n ,
\end{equation}
where $\tilde{m} = m_{11} \vee m_{12} \vee m_{21} \vee m_{22}$.

We now turn to the terms corresponding to jumps in the culled frequency process. 
By using the equalities 
\begin{align*} \MoveEqLeft
    z r \biggl[ \bigl( r + u_1 (1 - r) - u_2 r \bigr)^n - r^n - \frac{(1-r) u_1}{1 - u_1 - u_2} n r^{n - 1} \biggr] \\
    & = z \sum_{k = 1}^n \binom{n}{k} u_1^k (1 - u_1 - u_2)^{n - k} (r^{n - k + 1} - r^n) 
    - z \bigl( 1 - (1 - u_2)^n \bigr) r^n \\
    & \quad - z \biggl[1 - (1 - u_1 - u_2)^n - \frac{n u_1}{1 - u_1 - u_2}\biggr] (r^{n + 1} - r^n), \\ 
    \MoveEqLeft
    z (1 - r) \biggl[ \bigl( r + u_1 (1 - r) - u_2 r \bigr)^n - r^n + \frac{r u_2}{1 - u_1 - u_2} n r^{n - 1} \biggr] \\
    & = z \sum_{k = 2}^{n + 1} \binom{n}{k - 1} u_1^{k - 1} (1 - u_1 - u_2)^{n - k + 1} (r^{n - k + 1} - r^n) \\
    & \quad - z \sum_{k = 2}^n \binom{n}{k} u_1^{k} (1 - u_1 - u_2)^{n - k} (r^{n - k + 1} - r^n) \\
    & \quad + z \biggl[1 - (1 - u_1 - u_2)^n - \frac{n u_2}{1 - u_1 - u_2}\biggr] (r^{n + 1} - r^n) , 
\end{align*}
and
\begin{align*}
    \bigl(r + u_1 (1 - r) - u_2 r\bigr)^n - r^n 
    & = \sum_{k = 1}^n \binom{n}{k} u_1^k (1 - u_1 - u_2)^{n - k} (r^{n - k} - r^n) 
    - \bigl(1 - (1 - u_2)^n\bigr) r^n,
\end{align*}
we deduce 
\begin{align} \label{eq:branching_duality} \MoveEqLeft
    z r \int_{[0, 1]^2} \biggl[f \bigl(r + u_1 (1 - r) - u_2 r\bigr) - f(r) - \frac{(1 - r) u_1}{1 - u_1 - u_2} f'(r)\biggr] \mathbf{T}_2^{(z)} \mu_1(du) \\
    \MoveEqLeft
    \quad + z (1 - r) \int_{[0, 1]^2} \biggl[f \bigl(r + u_1 (1 - r) - u_2 r\bigr) - f(r) + \frac{r u_2}{1 - u_1 - u_2} f'(r)\biggr] \mathbf{T}_2^{(z)} \mu_2(du) \\
    \MoveEqLeft 
    \quad + \int_{[0, 1]^2} \Bigl[ f\bigl( r + u_1 (1 - r) - u_2 r\bigr) - f(r) \Bigr] \mathbf{T}_2^{(z)}\nu(du) \\
    & = \sum_{k = 2}^n \binom{n}{k} z \lambda_{n, k}^1 (r^{n - k + 1} - r^n) 
    - z \vartheta_{n, \mu} r^n - z \gamma_n^1 (r^{n + 1} - r^n) \\
    & \quad + z \lambda_{n, n}^2 (1 - r^n) 
    + \sum_{k = 2}^n \biggl[ \binom{n}{k - 1} z \lambda_{n, k - 1}^2 - \binom{n}{k} z \lambda_{n, k}^2 \biggr] (r^{n - k + 1} - r^n) 
    + z \gamma_{n}^2 (r^{n + 1} - r^n) \\
    & \quad + \sum_{k = 1}^n \binom{n}{k} \lambda_{n, k}^I (r^{n - k} - r^n) 
    - \vartheta_{n, \nu} r^n. 
\end{align}

Finally, let us define the collection $\{q_{nm} : n, m \in \mathbb{N}_0 \cup \{\dagger\}\}$ by setting 
\begin{equation} \label{eq:transition_rates_block_counting}
    q_{nm} = \begin{dcases*}
        \begin{aligned}[b]
            & 1_{\{m = n - 1\}} \biggl( \binom{n}{2} \frac{2}{z} c_1 + n \frac{a_{11}^0 + b_{12}(z) + \eta_1}{z} \biggr) \\
            & \quad + 1_{\{m > 0\}} \binom{n}{n - m + 1} z (\lambda_{n, n - m + 1}^1 - \lambda_{n, n - m + 1}^2) \\
            & \quad + \binom{n}{n - m} z \lambda_{n, n - m}^2 
            + \binom{n}{n - m} \lambda_{n, n - m}^I
        \end{aligned}
        & if $n \in \mathbb{N}$ and $0 \leq m < n$, \\
        1_{\{m = n + 1\}} \biggl( \binom{n + 1}{2} \frac{2}{z} (c_1 - c_2) - z (\gamma_n^1 - \gamma_n^2) \biggr) 
        + n \theta_{m - n} 
        & if $n \in \mathbb{N}$ and $n < m \leq n + \tilde{m}$, \\
        z \vartheta_{n, \mu} + \vartheta_{n, \nu} + n \frac{a_{22}^{(0)} + b_{21}(z) + \eta_2}{z} 
        & if $n \in \mathbb{N}$ and $j = \dagger$, \\
        0 & otherwise.
    \end{dcases*}
\end{equation}
Note that if $q_{nm} \geq 0$ for all $n \neq m$, then this collection defines the 
transition rates of a continuous-time Markov Chain with state space $\mathbb{N}_0 \cup \{\dagger\}$. 
Now we consider the function $H(r, n) : [0, 1] \times (\mathbb{N}_0 \cup \{\dagger\}) \to [0, 1]$ defined by $H(r, n) = r^n$ for $n \in \mathbb{N}_0$ and $H(r, \dagger) = 0$.  
Under the assumption of $q_{nm} \geq 0$ for all $n \neq m$, 
using the equalities \eqref{eq:selection_duality}, \eqref{eq:mutation_duality}, 
\eqref{eq:malthusian_duality} and \eqref{eq:branching_duality} we deduce that for 
any $r \in [0, 1]$ and any $n \in \mathbb{N}_0$, 
\begin{align*}
    \mathcal{L}^{(z)} H(r, n) 
    & = \sum_{m = 0}^{n - 1} q_{n m} (r^m - r^n) 
    + \sum_{m = n + 1}^{n + \tilde{m}} q_{n m} (r^m - r^n) 
    - q_{n \dagger} r^n 
    = \mathcal{Q}^{(z)} H(r, n) ,
\end{align*}
where $\mathcal{Q}^{(z)}$ is the generator defined by the collection 
$\{q_{nm} : n, m \in \mathbb{N}_0 \cup \{\dagger\}\}$. 
This reasoning proves the following theorem.

\begin{theorem}
    Assume that \eqref{eq:assump_poly} holds. 
     If $q_{nm} \geq 0$ for all $n \neq m$, then for any $r \in [0, 1]$ and 
    $n \in \mathbb{N}_0 \cup \{\dagger\}$ and any $t > 0$ we have 
    \[
        \mathbb{E}_r \bigl[ (R_{t}^{(z, r)})^n \bigr]
        = \mathbb{E}_n \bigl[ r^{N_t} \bigr], 
    \]
    where $N$ is a continuous-time Markov Chain over $\mathbb{N}_0 \cup \{\dagger\}$ 
    whose transition rates are defined by \eqref{eq:transition_rates_block_counting}. 
\end{theorem}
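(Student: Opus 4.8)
The plan is to upgrade the pointwise generator identity $\mathcal{L}^{(z)} H(\cdot,n)(r) = \mathcal{Q}^{(z)} H(r,\cdot)(n)$, already verified in the display preceding the statement for $H(r,n)=r^n$, into the claimed expectation duality by the standard ``generator duality implies moment duality'' argument. I would work on a product probability space carrying an independent copy of the Feller process $R^{(z,r)}$ (started at $r$, with generator $\mathcal{L}^{(z)}$ of Proposition \ref{prop:feller_property}) and of the continuous-time Markov chain $N$ (started at $n$, with generator $\mathcal{Q}^{(z)}$ determined by the rates \eqref{eq:transition_rates_block_counting}, which form an admissible $Q$-matrix precisely because $q_{nm}\geq 0$ for $n\neq m$). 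Note $H(\cdot,n)=(\cdot)^n\in C^2([0,1])$ lies in the domain of $\mathcal{L}^{(z)}$, and $H(r,\cdot)$ is bounded hence in the domain of $\mathcal{Q}^{(z)}$. Fixing $t>0$, I set
\[
    u(s) := \mathbb{E}\bigl[ H(R_s^{(z,r)}, N_{t-s}) \bigr], \qquad s \in [0,t],
\]
so that $u(0)=\mathbb{E}_n[r^{N_t}]$ and $u(t)=\mathbb{E}_r[(R_t^{(z,r)})^n]$; the whole point is to show that $u$ is constant.

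Differentiating $u$ via Dynkin's formula applied to each marginal together with independence and the chain rule yields
\[
    u'(s) = \mathbb{E}\bigl[ \mathcal{L}^{(z)} H(\cdot, N_{t-s})(R_s^{(z,r)}) \bigr]
    - \mathbb{E}\bigl[ \mathcal{Q}^{(z)} H(R_s^{(z,r)}, \cdot)(N_{t-s}) \bigr].
\]
By the generator identity the two integrands coincide pointwise, so $u'\equiv 0$, whence $\mathbb{E}_n[r^{N_t}]=u(0)=u(t)=\mathbb{E}_r[(R_t^{(z,r)})^n]$, which is the assertion. The case $n=\dagger$ is trivial, since $H(\cdot,\dagger)\equiv 0$ and $\dagger$ is absorbing, so both sides vanish.

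The delicate point is justifying the differentiation under the expectation and the interchange of limit and integral. Here $H$ is bounded by $1$ on $[0,1]\times(\mathbb{N}_0\cup\{\dagger\})$, and for each fixed $n$ the quantity $\mathcal{L}^{(z)}H(\cdot,n)(r)$, equivalently $\mathcal{Q}^{(z)}H(r,\cdot)(n)$, is uniformly bounded in $r\in[0,1]$ by the estimates in the proof of Proposition \ref{prop:feller_property}; however, this bound grows like $n^2$ in the second argument, coming from the $\binom{n}{2}$ coalescence and $\binom{n+1}{2}$ selection rates in \eqref{eq:transition_rates_block_counting}, so the integrand in $u'$ is controlled only by a quadratic functional of $N_{t-s}$ and $N$ is a priori not obviously conservative. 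I expect this to be the main obstacle. I would resolve it by localization: let $\sigma_M := \inf\{s\geq 0 : N_s \geq M\}$, run the argument with $N$ replaced by the chain stopped at $\sigma_M$ — whose rates are bounded, making the differentiation immediate and yielding the duality with an error term supported on $\{\sigma_M \leq t\}$ — and then let $M\to\infty$. Because $H$ is bounded by $1$, the error is dominated by $\mathbb{P}_n(\sigma_M\leq t)$, and the passage to the limit follows by dominated convergence once one verifies that $N$ does not explode before time $t$ (equivalently, that any mass escaping to infinity is routed to the cemetery $\dagger$, on which $H$ vanishes, consistently with $r^{N}\to 0$ as $N\to\infty$ for $r<1$). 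Establishing this non-explosion, or directly that the escaping contribution is null, is the only nontrivial ingredient beyond the algebra already carried out above.
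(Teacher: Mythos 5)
Your proposal is correct and follows essentially the same route as the paper: the paper's entire proof is the algebraic verification that $\mathcal{L}^{(z)}H(r,n)=\mathcal{Q}^{(z)}H(r,n)$ for $H(r,n)=r^n$ (the computation you take as given), after which it simply states that ``this reasoning proves the theorem,'' i.e.\ it implicitly invokes exactly the standard generator-to-moment-duality argument you spell out on the product space. If anything, you are more careful than the paper, which never addresses the differentiation-under-the-expectation, localization, or non-explosion issues you flag for the chain with rates \eqref{eq:transition_rates_block_counting}.
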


\begin{remark}
    As pointed out in section 6 of \cite{caballeroRelativeFrequencyTwo2024}, 
    the assumption $q_{nm} \geq 0$ for all $n \neq m$ restricts the cases we may study 
    by duality to those where there exists a selective advantage of one type (in this case type 1) over the other.  
    Moreover, it imposes restrictions, apart from $b_{12}, b_{21} \geq 0$ on the polynomials we can take into account for the model.  
\end{remark}

\subsection{Biological interpretation of duality}
\label{subsec:biological_duality}
Whenever the dual process of the culled frequency process exists, the rates given in 
\eqref{eq:transition_rates_block_counting} allow us to give a characterization of the evolutionary 
forces that arise due to the different reproduction mechanisms of the two species that constitute the population.  
Most of the mechanisms we will mention also appear in Section 9 of \cite{caballeroRelativeFrequencyTwo2024}, 
but there are new terms that appear in the moment dual because, unlike the population model in \cite{caballeroRelativeFrequencyTwo2024}, we are not assuming independence between the processes $X^{(1)}$ and $X^{(2)}$.
These effects are the following: 
\begin{description}
    \item[Selection] Considering the branching rate in \eqref{eq:transition_rates_block_counting}, we find two types of selection:
    \begin{itemize}
    \item[(i)]\textbf{Classical selection:} This type can be found in the following two terms: 
    The term $\theta_1$, which is the difference between the drift terms; 
    and $2 (c_1 - c_2) / z$, observed previously by Gillespie \cite{gillespieNalturalSelectionWithingeneration1974,gillespieNaturalSelectionWithingeneration1975}, corresponding to the difference between diffusion terms.
    \item[(ii)]\textbf{Frequency dependent selection:} This kind of selection arises in two forms. The first one
    is present in the transition rates \eqref{eq:transition_rates_block_counting} through the terms $\theta_m$ for $m \geq 2$. This type of selection was observed in \cite{gonzalezcasanovaDualityFixationXi2018}, and we want to emphasize that this form of selection can lead to coexistence in the culled frequency process.
    The second one is due to the difference in the jump measures of the two-type CBI with competition in the terms $- z (\gamma_n^1 - \gamma_n^2)$. This kind of selection arises from the asymmetry in large reproduction events between the types in the population and was recently studied in the context of the ancestral selection graph of a $\Lambda$-asymmetric Moran model in \cite{GONZALEZCASANOVA2024}.
    \end{itemize}
    \item[Frequency dependent variance] As noted in \cite{caballeroRelativeFrequencyTwo2024}, from the form of the culled frequency process (given as the solution to \eqref{eq:sde_r_general}), one observes that the asymmetry between the diffusion terms $c_1$ and $c_2$ modifies the variance. In \cite{GONZALEZCASANOVA202033}, this term was obtained in the context of populations
    that require different amounts of resources to reproduce and is referred to as \emph{efficiency}. 
    \item[Coalescence] Both the terms $2 c_1 / z$ and $z (\lambda_{n, n - m + 1}^1 - \lambda_{n, n - m + 1}^2)$ are associated with coalescence; the former being the rate of pairwise mergers, while the latter corresponds to mergers of multiple ancestral lines.  
    \item[Mutation] Mutation is obtained through the inclusion of immigration and cross-branching into the model, and is found in the terms $(a_{11}^0 + b_{12}(z) + \eta_1) / z$, $\lambda_{n, n - m}^2$, $\lambda_{n, n - m}^I$, $(a_{22}^0 + b_{21}(z) + \eta_2) / z$, $z \vartheta_{n, \mu}$ and $\vartheta_{n, \nu}$. Apart from the terms included in \cite{caballeroRelativeFrequencyTwo2024} we obtain new terms thanks to the cross-branching; namely, $\lambda_{n, n -  m}^2$, $b_{12}(z) / z$, $b_{21}(z) / z$ and $z \vartheta_{n, \mu}$.        
\end{description}

\section{Large population limit}
\label{sec:large_population}
In previous sections, we developed the culled frequency process, $R^{(z,r)}$, specifically designed to describe the dynamics of the first coordinate of $(R, Z)$, while keeping the total population size constant at $z>0$. 
In this section, we explore the large population limit of the culled frequency process, proving that it exists under certain hypotheses, and present two examples that exhibit coexistence. 
The latter term means that, in these examples, neither population becomes extinct; equivalently, the frequency process never reaches $0$ or $1$.

\subsection{The large population limit}
\label{subsec:large_population_limit}
We start by noting that if we see the culled frequency process as the solution of 
the SDE \eqref{eq:sde_r_general}, we can actually consider that the functions 
$b_{ij}$ depend on $z$ (which is considered fixed for \eqref{eq:sde_r_general}). 
We stress this fact by writing $b_{ij}^{(z)}$ for the rest of the section. 
This dependence will allow us to introduce suitable rescalings such that the limit 
\begin{equation} \label{eq:limit_bij}
    \lim_{z \to \infty} \frac{b_{ij}^{(z)}(z r)}{z}
\end{equation}
is well defined and finite for all $r \in [0, 1]$. 
To see that this is necessary, notice that if $b_{11}^{(z)}(x) = x (1 - x)$ for all $z > 0$,  
then the limit in \eqref{eq:limit_bij} exists but is equal to $-\infty$, while the limit 
does not exist if $b_{11}^{(z)} = x \sin(x)$ for all $z > 0$. 

Let us assume then that the limit in \eqref{eq:limit_bij} exists uniformly for 
$r \in [0, 1]$ and denote said limit by $\beta_{ij}$. 
Let us further assume that $\beta_{ij}(r) = \lim_{z \to \infty} b_{ij}^{(z)}(rz)/z$ is 
Lipschitz in $[0, 1]$. 
Having stated the assumptions on $\beta_{ij}$, consider the ordinary differential 
equation 
\begin{align}
    d R_t^{(\infty, r)} = {}
    & \beta_{11}(R_t^{(\infty, r)}) (1 - R_t^{(\infty, r)}) dt 
    - \beta_{22}(1 - R_t^{(\infty, r)}) R_t^{(\infty, r)} dt 
    \label{eq:limit_r_ode} \\
    & + \beta_{12}( 1 - R_t^{(\infty, r)} ) ( 1 - R_t^{(\infty, r)} ) dt 
    - \beta_{21}( R_t^{(\infty, r)} ) R_t^{(\infty, r)} dt \\
    & + ( 1 - R_t^{(\infty, r)} )^2 \int_{U_2} w_1 \mu_2(dw)  
    - ( R_t^{(\infty, r)} )^2 \int_{U_2} w_2 \mu_1(dw),
\end{align}
with initial condition $R_0^{(\infty, r)} = r$. 
By comparing \eqref{eq:sde_r_general} with \eqref{eq:limit_r_ode} one might guess that 
the solution of \eqref{eq:limit_r_ode} is the large population limit of the 
culled frequency process. 
Indeed, this is what the following result states.

\begin{theorem} \label{th:large_population_limit}
    For any $T > 0$ we get 
    \[
        \lim_{z \to \infty} \mathbb{E} \biggl[ \sup_{t \leq T} 
        \abs{ R_t^{(z, r)} - R_t^{(\infty, r)} }^2 \biggr] = 0 ;
    \]
    this is, $R^{(z, r)} \to R^{(\infty, r)}$ uniformly in compacts in $L^2$ as 
    $z \to \infty$. 
\end{theorem}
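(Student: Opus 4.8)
The plan is to run the standard comparison between an SDE and its limiting ODE: subtract the two integral equations, control the martingale part with the Burkholder--Davis--Gundy inequality, control the drift part by splitting it into a uniform-error piece and a Lipschitz piece, and close the estimate with Gronwall's inequality. As a preliminary step I would record that the right-hand side $F$ of \eqref{eq:limit_r_ode}, namely
\[
    F(r) = \beta_{11}(r)(1-r) - \beta_{22}(1-r) r + \beta_{12}(1-r)(1-r) - \beta_{21}(r) r + (1-r)^2 \int_{U_2} w_1\,\mu_2(dw) - r^2 \int_{U_2} w_2\,\mu_1(dw),
\]
is Lipschitz on $[0,1]$ with constant $L_F$ (products and sums of bounded Lipschitz functions, plus polynomial terms with finite coefficients $\int_{U_2} w_j\,\mu_i(dw)$), so that \eqref{eq:limit_r_ode} has a unique solution by the Picard--Lindel\"of theorem; its confinement to $[0,1]$ follows because $F(0)\ge 0$ and $F(1)\le 0$, using $b_{ij}(0)=0$, $b_{12},b_{21}\ge 0$ and the nonnegativity of $\eta,c$ and of the constants $\int_{U_2} w_j\,\mu_i(dw)$. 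Since both $R^{(z,r)}$ and $R^{(\infty,r)}$ take values in $[0,1]$, the difference $\Delta_t := R_t^{(z,r)} - R_t^{(\infty,r)}$ is bounded and all the expectations below are finite.

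The core analytic input is that the full drift of \eqref{eq:sde_r_general}, after folding in the compensator $\int_{U_2}(\widetilde{g}^{(z)}+\widetilde{h}^{(z)})(\cdot,w)\,\nu(dw)$ of the uncompensated $N_3$-integral, converges to $F$ uniformly on $[0,1]$, while the predictable quadratic variation rate of the genuinely martingale remainder tends to $0$. For the drift, $\widetilde{D}$ converges to the four $\beta$-terms uniformly by the assumed uniform convergence in \eqref{eq:limit_bij}; $m_c^1(r)\to -r^2\int_{U_2} w_2\,\mu_1(dw)$ and $m_c^2(r)\to (1-r)^2\int_{U_2} w_1\,\mu_2(dw)$ uniformly, by dominated convergence since $\frac{z w_i}{z+w_1+w_2}\uparrow w_i$ with $w_i$ being $\mu_i$-integrable for $i\neq j$; and $S$, $m$ are $O(1/z)$ uniformly, $S_c\to 0$ uniformly (its bracket vanishes by dominated convergence and $r(1-r)\le 1/4$), as does $\int_{U_2}(\widetilde{g}^{(z)}+\widetilde{h}^{(z)})\,\nu(dw)$. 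For the martingale part, the bounds already recorded in the proof of Proposition \ref{prop:important_uniqueness_existence} give $\sigma(r)^2\le (c_1+c_2)/(2z)$ together with the jump-variance estimates by $4\int_{U_2}\frac{(w_1+w_2)^2}{z+w_1+w_2}(\mu_1+\mu_2)(dw)$ and $2\int_{U_2}\frac{w_1+w_2}{z+w_1+w_2}\,\nu(dw)$, all tending to $0$ as $z\to\infty$ by dominated convergence, uniformly in $r$.

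Writing $G^{(z)}$ for this augmented drift and $\mathcal{M}^{(z)}$ for the martingale remainder (the Brownian integral plus the three compensated Poisson integrals), I would express $\Delta_t = \int_0^t [G^{(z)}(R_s^{(z,r)}) - F(R_s^{(\infty,r)})]\,ds + \mathcal{M}_t^{(z)}$ and split the integrand as $[G^{(z)}(R_s^{(z,r)}) - F(R_s^{(z,r)})] + [F(R_s^{(z,r)}) - F(R_s^{(\infty,r)})]$, bounding the first difference by $\epsilon_z := \sup_{r\in[0,1]}\abs{G^{(z)}(r)-F(r)}\to 0$ and the second by $L_F\abs{\Delta_s}$. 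Then by $(a+b)^2\le 2a^2+2b^2$, Cauchy--Schwarz in time, Doob's inequality and Burkholder--Davis--Gundy one obtains
\[
    \mathbb{E}\Bigl[\sup_{t\le T}\abs{\Delta_t}^2\Bigr] \le 4T^2\epsilon_z^2 + C\,T\,\delta_z + 4L_F^2 T\int_0^T \mathbb{E}\Bigl[\sup_{u\le s}\abs{\Delta_u}^2\Bigr]\,ds,
\]
where $\delta_z\to 0$ collects the vanishing variance bounds and $C$ is the BDG constant. Gronwall's inequality then yields $\mathbb{E}[\sup_{t\le T}\abs{\Delta_t}^2]\le (4T^2\epsilon_z^2 + C\,T\,\delta_z)\,e^{4L_F^2 T^2}\to 0$, which is the claim.

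The Gronwall/BDG machinery is routine; the genuine work lies in the uniform convergence of the augmented drift to $F$ and the uniform vanishing of the diffusion and jump variances. The delicate point there is the interchange of limits in $m_c^1, m_c^2, S_c$ and in the variance integrals, which requires $z$-independent dominating functions built from the integrability hypotheses $\int_{U_2}(\norm{w}\wedge\norm{w}^2 + w_j)\,\mu_i(dw)<\infty$ and $\int_{U_2}(1\wedge\norm{w})\,\nu(dw)<\infty$; fortunately most of these dominations are already available from the proof of Proposition \ref{prop:important_uniqueness_existence}. A secondary subtlety, easily handled, is that the $N_3$-integral in \eqref{eq:sde_r_general} is uncompensated, so its compensator must be moved into the drift before the martingale estimate is applied.
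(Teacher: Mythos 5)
Your proposal is correct and follows essentially the same route as the paper's proof: compensate the $N_3$-integral, split $R^{(z,r)}-R^{(\infty,r)}$ into a martingale part whose $L^2$-maximal norm vanishes as $z\to\infty$ (Doob/It\^o isometry in the paper, BDG in your write-up, with the same dominated-convergence bounds on the variance rates) plus a drift part decomposed into a uniformly vanishing error and a Lipschitz-controlled term, and close with Gronwall's inequality. The only differences are cosmetic: you fold the entire drift into a single $G^{(z)}$ and compare with $F$ via its Lipschitz constant, whereas the paper does this term by term through the decomposition $A^{(z,1)},A^{(z,2)},A^{(z,3)}$, and you add the ODE well-posedness and $[0,1]$-confinement preliminary that the paper leaves implicit.
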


\begin{proof}
    Considering the SDE \eqref{eq:sde_r_general} and the ODE \eqref{eq:limit_r_ode} we get 
    \[
        R_t^{(z, r)} - R_{t}^{(\infty, r)} 
        = A_t^{(z, 1)} + A_t^{(z, 2)} + A_t^{(z, 3)},
    \]
    where 
    \begin{align*}
        A_t^{(z, 1)} = {}
        & \int_0^t \sigma(R_{s-}^{(z, r)}) dB_s 
        + \int_0^t \int_{U_2} \bigl[ \widetilde{g}^{(z)}(R_{s-}^{(z, r)}, w) + 
        \widetilde{h}^{(z)}(R_{s-}^{(z, r)}, w) \bigr] \widetilde{N}_3(ds, dw) \\
        & + \int_0^t \int_{U_2} \int_0^\infty \bigl[ g_1^{(z)}(R_{s-}^{(z,r)}, w, v) + 
        h_1^{(z)}(R_{s-}^{(z,r)}, w, v) \bigr] \widetilde{N}_{1}(ds, dw, dv)  \\
        & + \int_0^t \int_{U_2} \int_0^\infty \bigl[g_2^{(z)}(R_{s-}^{(z,r)}, w, v)
        + h_2^{(z)}(R_{s-}^{(z,r)}, w, v) \bigr] \widetilde{N}_{2}(ds, dw, dv), \\
        A_t^{(z, 2)} = {} 
        & \int_0^t \bigl[ S(R_{s-}^{(z, r)}) + S_c(R_{s-}^{(z, r)}) + m(R_{s-}^{(z, r)}) \bigr] ds
        + \int_0^t \int_{U_2} \bigl[ \widetilde{g}^{(z)}(R_{s-}^{(z, r)}, w)
        + \widetilde{h}^{(z)}(R_{s-}^{(z, r)}, w) \bigr] \nu(dw) ds , \\
        \shortintertext{and}
        A_t^{(z, 3)} = {} 
        & \int_0^t \Bigl[ \widetilde{D}(R_{s-}^{(z, r)}) - 
        \begin{aligned}[t]
            \bigl( & \beta_{11}(R_{s-}^{(\infty, r)}) (1 - R_{s-}^{(\infty, r)}) - \beta_{22}(1 - R_{s-}^{(\infty, r)}) R_{s-}^{(\infty, r)} \\
            & + \beta_{12}(1 - R_{s-}^{(\infty, r)}) (1 - R_{s-}^{(\infty, r)}) - \beta_{21}(R_{s-}^{(\infty, r)}) R_{s-}^{(\infty, r)} \bigr) \Bigr] ds
        \end{aligned} \\
        & + \int_0^t \Bigl[ m_c^1(R_{s-}^{(z, r)}) + (R_{s-}^{(\infty, r)})^2 \int_{U_2} w_2 \mu_1(dw) \Bigr] ds \\
        & + \int_0^t \Bigl[ m_c^2(R_{s-}^{(z, r)}) - (1 - R_{s-}^{(\infty, r)})^2 \int_{U_2} w_1 \mu_2(dw) \Bigr] ds .
    \end{align*}

    We first consider the term $A_t^{(z, 1)}$. 
    Note that by the definition of $\sigma$ given in \eqref{eq:sdeTerms} together with Doob's $L^2$ inequality and Itô's isometry we obtain, for any $t \in [0, T]$,
    \begin{align*}
        \mathbb{E} \biggl[ \sup_{u \leq t} \biggl( \int_0^u \sigma(R_{s-}^{(z, r)}) dB_s \biggr)^2 \biggr] 
        & \leq 4 \mathbb{E} \biggl[ \int_0^T \sigma(R_{s-}^{(z, r)})^2 ds \biggr] 
        \leq \frac{8}{z} (c_1 + c_2) T.
    \end{align*}
    Applying again Doob's $L^2$ inequality we deduce 
    \begin{align*}
        \MoveEqLeft
        \mathbb{E} \biggl[ \sup_{u \leq t} \biggl( \int_0^u \int_{U_2} \bigl[ \widetilde{g}^{(z)}(R_{s-}^{(z, r)}, w) + 
        \widetilde{h}^{(z)}(R_{s-}^{(z, r)}, w) \bigr] \widetilde{N}_3(ds, dw) \biggr)^2 \biggr] \\ 
        & \leq 4 \mathbb{E} \biggl[ \int_0^T \int_{U_2} \bigl[ \widetilde{g}^{(z)}(R_{s-}^{(z, r)}, w) + 
        \widetilde{h}^{(z)}(R_{s-}^{(z, r)}, w) \bigr]^2 \nu(dw) ds \biggr] \\
        & \leq 4 T \int_{U_2} \frac{(w_1 + w_2)^2}{(z + w_1 + w_2)} \nu(dw).
    \end{align*}
    Analogous computations yield 
    \begin{align*}
        \MoveEqLeft
        \sum_{i = 1}^2 \mathbb{E} \biggl[ \sup_{u \leq t} \biggl( \int_0^u \int_{U_2} \int_0^\infty \bigl[ g_{i}^{(z)}(R_{s-}^{(z, r)}, w, v) + 
        h_i^{(z)}(R_{s-}^{(z, r)}, w, v) \bigr] \widetilde{N}_i(ds, dw, dv) \biggr)^2 \biggr] \\
        & \leq 4 T \int_{U_2} \frac{z (w_1 + w_2)^2}{(z + w_1 + w_2)^2} (\mu_1 + \mu_2)(dw) .
    \end{align*}
    By the Dominated Convergence Theorem we note that 
    \[
        \int_{U_2} \frac{(w_1 + w_2)^2}{(z + w_1 + w_2)} \nu(dw)
        + \int_{U_2} \frac{z (w_1 + w_2)^2}{(z + w_1 + w_2)^2} (\mu_1 + \mu_2)(dw) \to 0 \quad\text{as $z \to \infty$.}
    \]
    Hence, we deduce the existence of a collection of constants $\{K_1(T, z) : z > 0\}$ that fulfill $K_1(T, z) \to 0$ as $z \to \infty$ and 
    \[
        \mathbb{E} \Bigl[ \sup_{u \leq t} (A_u^{(z, 1)})^2 \Bigr] 
        \leq K_1(T, z) 
        \quad\text{for all $t \in [0, T]$ and $z > 0$.}
    \]

    Meanwhile, to control the term $A_t^{(z, 2)}$ we simply use the triangle inequality to deduce, for any $t \in [0, T]$, that 
    \begin{align*}
        \mathbb{E} \Bigl[ \sup_{u \leq t} (A_u^{(z, 2)})^2 \Bigr] \leq {}
        & 4 \biggl( \int_0^T \biggl( \frac{\abs{c_2 - c_1}}{2z} + \frac{\eta_1 + \eta_2}{z} \biggr) ds \biggr)^2 
        + 4 \biggl( \int_0^T \int_{U_2} \frac{w_1 (w_1 + w_2)}{z + w_1 + w_2} \mu_1(dw) ds \biggr)^2 \\
        & + 4 \biggl( \int_0^T \int_{U_2} \frac{w_2 (w_1 + w_2)}{z + w_1 + w_2} \mu_2(dw) ds \biggr)^2
        + 4 \biggl( \int_0^T \int_{U_2} \frac{w_1 + w_2}{z + w_1 + w_2} \nu(dw) ds \biggr)^2 \\
        = {} 
        & 4 T^2 \begin{aligned}[t]
            \biggl( & \frac{1}{4 z^2} \bigl( \abs{c_2 - c_1} + 2 \eta_1 + 2 \eta_2 \bigr)^2 
            + \biggl( \int_{U_2} \frac{w_1 (w_1 + w_2)}{z + w_1 + w_2} \mu_1(dw)  \biggr)^2 \\
            & + \biggl( \int_{U_2} \frac{w_2 (w_1 + w_2)}{z + w_1 + w_2} \mu_2(dw)  \biggr)^2  
            + \biggl( \int_{U_2} \frac{w_1 + w_2}{z + w_1 + w_2} \nu(dw) \biggr)^2 \biggr).
        \end{aligned}
    \end{align*}
    Thus, by noting that 
    \[
        \sum_{i = 1}^2 \int_{U_2} \frac{w_i (w_1 + w_2)}{z + w_1 + w_2} \mu_i(dw)
        + \int_{U_2} \frac{w_1 + w_2}{z + w_1 + w_2} \nu(dw)
        \to 0 \quad\text{as } z \to \infty
    \]
    by the Dominated Convergence Theorem, we obtain a collection of constants $\{K_2(T, z) : z > 0\}$ that satisfy both $K_2(T, z) \to 0$ as $z \to \infty$ and 
    \[
        \mathbb{E} \Bigl[ \sup_{u \leq t} (A_u^{(z, 2)})^2 \Bigr] 
        \leq K_2(T, z) 
        \quad\text{for all $t \in [0, T]$ and $z > 0$.}
    \]

    Let us now tackle the term $A_t^{(z, 3)}$. 
    Recall, from \eqref{eq:sdeTerms}, that for $r \in [0, 1]$, 
    \[
        \widetilde{D}(r) 
        = \frac{b_{11}^{(z)}(z r)}{z} (1 - r) 
        - \frac{b_{22}^{(z)}(z (1 - r))}{z} r 
        + \frac{b_{12}{(z)}(z (1 - r))}{z} (1 - r)
        - \frac{b_{21}^{(z)}(z r)}{z} r.
    \]
    Now note that, for $t\in[0,T]$,
    \begin{align*}
        \MoveEqLeft
        \mathbb{E} \biggl[ \sup_{u \leq t} \biggl(
            \int_0^u \biggl(
                \frac{b_{11}^{(z)}(R_{s-}^{(z, r)})}{z} (1 - R_{s-}^{(z, r)})
                - \beta_{11}(R_{s-}^{(\infty, r)}) (1 - R_{s-}^{(\infty, r)})
            \biggr) ds
        \biggr)^2 \biggr] \\
        \leq {} 
        & 3 \mathbb{E} \biggl[ \sup_{u \leq t} \biggl(
            \int_0^u \biggl[
                \frac{b_{11}^{(z)}(z R_{s-}^{(z, r)})}{z}
                - \beta_{11}(R_{s-}^{(z, r)})
            \biggr]
            (1 - R_{s-}^{z, r}) ds
        \biggr)^2 \biggr] \\
        & + 3 \mathbb{E} \biggl[ \sup_{u \leq t} \Bigl(
            \int_0^u \bigl[ \beta_{11}(R_{s-}^{(z, r)}) - \beta_{11}(R_{s-}^{(\infty, r)}) \bigr] (1 - R_{s-}^{(z, r)}) ds
        \Bigr)^2 \biggr] \\
        & + 3 \mathbb{E} \biggl[ \sup_{u \leq t} \Bigl( 
            \int_0^u \beta_{11}(R_{s-}^{(\infty, r)}) (R_{s-}^{(\infty, r)} - R_{s-}^{(z, r)}) ds
         \Bigr)^2 \biggr],
    \end{align*}
    By assumption $\beta_{11}$ is Lipschitz, say with constant $L_{11} > 0$, so 
    $\norm{\beta_{11}}_\infty := \sup \{\abs{\beta_{11}(r)} : r \in [0, 1]\} < \infty$. 
    Moreover, $b_{11}^{(z)}(zr)/z \to \beta_{11}(r)$ uniformly on $r \in [0, 1]$, so there exists a collection of constants $\{C_z^{(11)} : z > 0\}$, with $C_z^{11} \to 0$ as $z \to \infty$, for which 
    \[
        \sup_{r \in [0, 1]} \biggl\lvert \frac{b_{11}^{(z)}(z r)}{z} - \beta_{11}(r) \biggr\rvert \leq C_z^{(11)} \quad\text{for $z > 0$.}
    \]
    By these remarks, we deduce 
    \begin{align*}
        \MoveEqLeft
        \mathbb{E} \biggl[ \sup_{u \leq t} \biggl(
            \int_0^u \biggl(
                \frac{b_{11}^{(z)}(R_{s-}^{(z, r)})}{z} (1 - R_{s-}^{(z, r)})
                - \beta_{11}(R_{s-}^{(\infty, r)}) (1 - R_{s-}^{(\infty, r)})
            \biggr) ds
        \biggr)^2 \biggr] \\
        & \leq 3 T^2 C_z^{(11)} + 3 ( L_{11}^2 + \norm{\beta_{11}}_\infty^2 ) 
        \mathbb{E} \biggl[ \sup_{u \leq t} \Bigl( \int_0^u \abs{R_{s-}^{(z, r)} - R_{s-}^{(\infty, r)}} ds \Bigr)^2 \biggr] \\
        & \leq 3 T^2 C_z^{(11)} + 3 ( L_{11}^2 + \norm{\beta_{11}}_\infty^2 ) T 
        \mathbb{E} \Bigl[ \int_0^t \sup_{u \leq s} \abs{R_{u}^{(z, r)} - R_{u}^{(\infty, r)}}^2 ds \Bigr].
    \end{align*}
    By proceeding in the same way for the rest of the terms in $\widetilde{D}$, it is seen that there is a collection of constants $\{C_z^{(D)} : z > 0\}$, with $C_z^{(D)} \to 0$ as $z  \to \infty$, and a constant $C_D > 0$ such that 
    \begin{align*}
        \mathbb{E} \biggl[ \sup_{u \leq t} \Bigl(
            \int_0^u (\widetilde{D}(R_{s-}^{(z, r)}) - \widetilde{D}_\infty(R_{s-}^{(\infty, r)})) ds 
        \Bigr)^2 \biggr]
        & \leq T^2 C_z^{(D)} + T C_D \mathbb{E} \Bigl[ \int_0^t \sup_{u \leq s} \abs{R_{u}^{(z, r)} - R_{u}^{(\infty, r)}}^2 ds \Bigr],
    \end{align*}
    where 
    \[
        \widetilde{D}_\infty(r) 
        \coloneq \beta_{11}(r) (1 - r) - \beta_{22}(1 - r) r 
        + \beta_{12}(1 - r) (1 - r) - \beta_{21}(r) r . 
    \]
    On the other hand, we see that 
    \begin{align*}
        \MoveEqLeft
        \mathbb{E} \biggl[ \sup_{u \leq t} \biggl( \int_0^u
            \Bigl( m_c^1(R_{s-}^{(z, r)}) + (R_{s-}^{(\infty, r)})^2 \int_{U_2} w_2 \mu_1(dw) \Bigr) ds 
        \biggr)^2 \biggr] \\
        \leq {}
        & 2 \mathbb{E} \biggl[ \sup_{u \leq t} \biggl(
            \int_0^u \int_{U_2} \frac{w_2 (w_1 + w_2)}{z + w_1 + w_2} \mu_1(dw) (R_{s-}^{(z, r)})^2 ds
        \biggr)^2 \biggr] \\
        & + 2 \mathbb{E} \biggl[ \sup_{u \leq t} \Bigl(
            \int_{0}^u \int_{U_2} w_2 \mu_1(dw) \bigl[ (R_{s-}^{(\infty, r)})^2 - (R_{s-}^{(z, r)})^2 \bigr] ds
        \Bigr)^2 \biggr] \\
        \leq {} 
        & 2 \Bigl( \int_{U_2} \frac{w_2 (w_1 + w_2)}{z + w_1 + w_2} \mu_1(dw) \Bigr)^2 T^2 
        + 8 \Bigl( \int_{U_2} w_2 \mu_1(dw) \Bigr)^2 T \mathbb{E} \Bigl[
            \int_0^t \sup_{u \leq s} \abs{R_{u}^{(z, r)} - R_u^{(\infty, r)}}^2 ds
        \Bigr] .
    \end{align*}
    By doing similar computations to the term involving $m_c^2$, from these bounds we deduce the existence of $\{K_3(T, z) : z > 0\}$ and $K_4(T)$ such that $K_3(T, z) \to 0$ as $z \to \infty$ and 
    \[
        \mathbb{E} \Bigl[ \sup_{u \leq t} (A_u^{(z, 3)})^2 \Bigr]
        \leq K_3(T, z) + K_4(T) \mathbb{E} \Bigl[
            \int_0^t \sup_{u \leq s} \abs{R_{u}^{(z, r)} - R_u^{(\infty, r)}}^2 ds
        \Bigr] 
        \quad\text{for all $t \in [0, T]$ and $z > 0$.}
    \]

    As a consequence, for all $t \in [0, T]$ and $z > 0$, we get the 
    inequality 
    \[
        \mathbb{E} \Bigl[ \sup_{u \leq t} \abs{R_{u}^{(z, r)} - R_u^{(\infty, r)}}^2 \Bigr] 
        \leq 3 \bigl(K_1(T, z) + K_2(T, z) + K_3(T, z)\bigr) 
        + 3 K_4(T) \int_0^t \mathbb{E} \Bigl[ \sup_{u \leq s} \abs{ R_{u}^{(z, r)} - R_u^{(\infty, r)} }^2 \Bigr] ds.
    \]
    Now, Gronwall's lemma entails 
    \[
        \mathbb{E} \Bigl[ \sup_{u \leq T} \abs{R_{u}^{(z, r)} - R_u^{(\infty, r)}}^2 \Bigr] 
        \leq 3 \bigl(K_1(T, z) + K_2(T, z) + K_3(T, z)\bigr) \exp\{3 K_4(T) T\}.
    \]
    The proof concludes by noting that 
    $\bigl(K_1(T, z) + K_2(T, z) + K_3(T, z)\bigr) \exp\{3 K_4(T) T\} \to 0$ as 
    $z \to \infty$. 
\end{proof}

Given Theorem \ref{th:large_population_limit}, we now proceed to explain two particular cases where coexistence 
arises in the dynamical system described by \eqref{eq:limit_r_ode} by different 
mechanisms: mutation and logistic growth. 
The analysis done in section \ref{subsec:linear_coeffs} and section \ref{subsec:logistic_growth} is based on the techniques contained in Chapter 2 of \cite{strogatzNonlinearDynamicsChaos2019}, see in particular section 2.2 and section 2.4 of \cite{strogatzNonlinearDynamicsChaos2019}. 

\subsection{Linear coefficients} 
\label{subsec:linear_coeffs}
For the first example we consider $b_{ij}^{(z)}(x) = a_{ij} x$, where $a_{ij}$ is 
constant for $i, j \in \{1, 2\}$ such that $a_{ij} \geq 0$ whenever $i \neq j$. 
Under this assumption \eqref{eq:limit_r_ode} becomes 
\begin{equation} \label{eq:ode_r_linear}
    d R_t^{(\infty, r)} 
    = d_1 R_t^{(\infty, r)} (1 - R_t^{(\infty, r)}) dt + d_2 (1 - R_t^{(\infty, r)})^2 dt 
    - d_3 (R_t^{(\infty, r)})^2 dt, 
\end{equation}
where $d_1, d_2$ and $d_3$ are defined as:
\begin{align*}
    d_1 
    & = a_{11} - a_{22}, \\
    d_2 & = a_{12} + \int_{U_2} w_1 \mu_2(dw) , \\
    d_3 &  = a_{21} + \int_{U_2} w_2 \mu_1(dw) .
\end{align*}
It is not difficult to see that the equilibria that arise in the dynamical system modeled through \eqref{eq:ode_r_linear} are described by:
\begin{enumerate}
    \item When $d_2 - d_1 - d_3 \neq 0$, the equilibria of the system will be given by
        \[
            \frac{2 d_2 - d_1 \pm \sqrt{d_1^2 + 4 d_2 d_3}}{2 (d_2 - d_1 - d_3)},
        \]
        of which 
        \[
            \frac{2 d_2 - d_1 - \sqrt{d_1^2 - 4 d_2 d_3}}{2 (d_2 - d_1 - d_3)},
        \]
        will be the stable one. 
        A complete description of the equilibria is given below: 
        \begin{enumerate}
            \item There is a stable equilibrium at $0$ and an unstable equilibrium at 
                $1$ if $d_2 = d_3 = 0$ and $d_1 < 0$. 
            \item There is an unstable equilibrium at $0$ and a stable equilibrium at 
                $1$ if $d_2 = d_3 = 0$ and $d_1 > 0$.
            \item There is a unique equilibrium, which will be stable, at $0$ if 
                $d_2 = 0$, $d_1 \leq 0$ and $d_3 > 0$. 
            \item There is a unique equilibrium, which will be stable, at $1$ if 
                $d_3 = 0$, $d_1 \geq 0$ and $d_2 > 0$. 
            \item There is an unstable equilibrium at $0$ and a stable equilibrium in 
                $(0, 1)$ if $d_2 = 0$, $d_1 > 0$ and $d_3 > 0$. 
            \item There is an unstable equilibrium at $1$ and a stable equilibrium in 
                $(0, 1)$ if $d_3 = 0$, $d_1 < 0$ and $d_2 > 0$. 
            \item There is a unique equilibrium in $(0, 1)$, which will be stable, 
                if $d_2, d_3 > 0$. 
        \end{enumerate}
    \item When $d_2 - d_1 - d_3 = 0$ there is a unique equilibrium in $[0, 1]$, 
        given by 
        \[
            \frac{d_2}{2 d_2 - d_1},
        \]
        which will be stable. Explicitly, the equilibrium will be:
        \begin{enumerate}
            \item $0$ if $d_2 = 0$ and $d_1 = -d_3$; 
            \item $1$ if $d_1 = d_2$ and $d_3 = 0$; 
            \item not trivial, i.e. it is located in $(0, 1)$, in any other case. 
        \end{enumerate}
\end{enumerate}
Because we are looking for non-trivial equilibria, the cases that we are interested in 
are 1e, 1f, 1g and 2c. 
Figure \ref{fig:linear_coexistence} shows phase diagrams of the behavior of the dynamical system in each of these cases.
Before proceeding to the next example, notice that we can rewrite \eqref{eq:ode_r_linear} 
as 
\[
    d R_t^{(\infty, r)} = (d_1 - d_2 + d_3) R_t^{(\infty, r)} (1 - R_t^{(\infty, r)}) dt 
    + d_2 (1 - R_t^{(\infty, r)}) dt - d_3 R_t^{(\infty, r)} dt .
\]
This form shows that the non-trivial equilibrium that is in the system is due to the 
mutation terms $d_2 (1 - R_t^{(\infty, r)})$ and $d_3 R_t^{(\infty, r)}$, that counterbalance the effect that the ``classical selection'' term $(d_1 - d_2 + d_2) R^{(\infty, r)} (1 - R^{(\infty, r)})$ has. 

\begin{figure}[ht]
    \centering
    \subcaptionbox{Case $d_2 = 0$, $d_1 > 0$ and $d_3 > 0$.}[0.45\linewidth]{%
    \includegraphics[width=\linewidth]{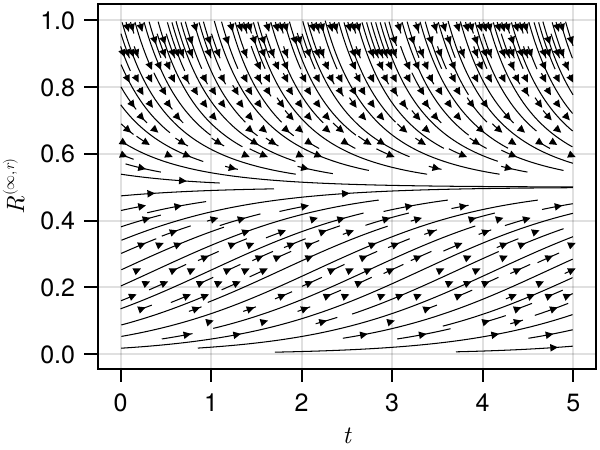}} \hfill %
    \subcaptionbox{Case $d_3 = 0$, $d_1 < 0$ and $d_2 > 0$.}[0.45\linewidth]{%
    \includegraphics[width=\linewidth]{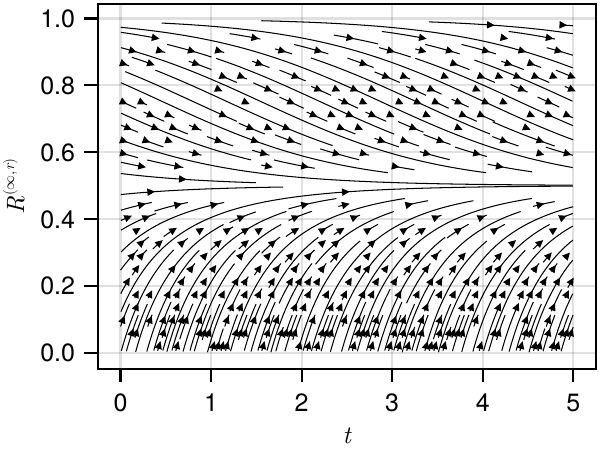}} %
    \subcaptionbox{Case $d_2, d_3 > 0$.}[0.45\linewidth]{%
    \includegraphics[width=\linewidth]{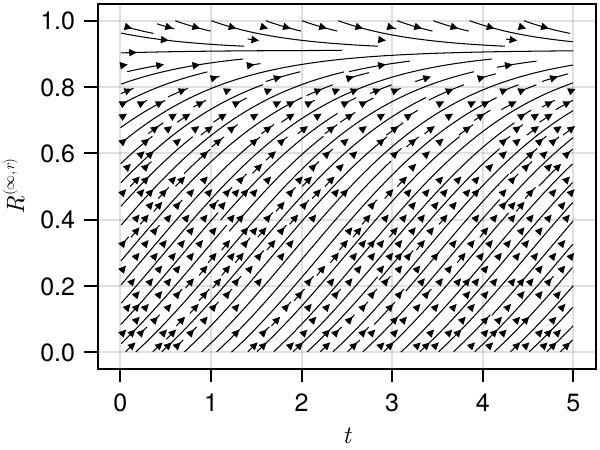}} \hfill
    \subcaptionbox{Case $d_2 - d_1 - d_3 = 0$.}[0.45\linewidth]{%
    \includegraphics[width=\linewidth]{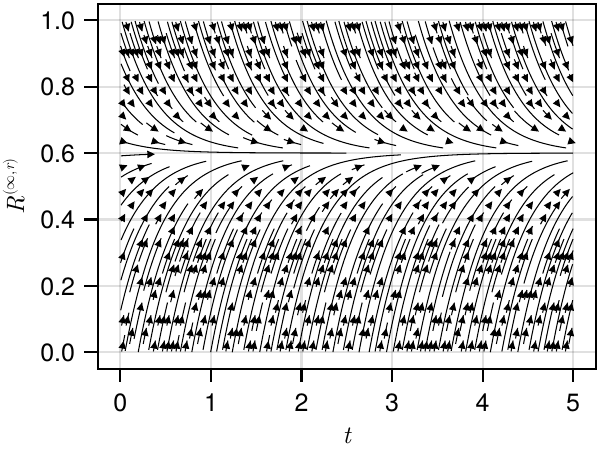}}
    \caption{Examples of non-trivial equilibrium in the linear coefficients case.}
    \label{fig:linear_coexistence}
\end{figure}

\subsection{Logistic growth and no inter-type branching} 
\label{subsec:logistic_growth}
Let us take $b_{ii}^{(z)}(x) = c_{ii} x^2 / z + a_{ii} x$ for $i \in \{1, 2\}$, 
$b_{ij}^{(z)} \equiv 0$ is $i \neq j$, and $\mu_i(\{w_j > 0\}) = 0$ for 
$i \neq j$. Under this assumptions, $\beta_{ii}(x) = c_{ii} x^2 + a_{ii} x$ for 
$i \in \{1, 2\}$ and $\beta_{ij} \equiv 0$, so \eqref{eq:limit_r_ode} can be 
rewritten as 
\begin{equation}
    d R_t^{(\infty, r)} = d_1 R_t^{(\infty, r)} (1 - R_t^{(\infty, r)}) 
    \biggl( R_t^{(\infty, r)} - \frac{d_2}{d_1} \biggr) dt ,
\end{equation}
where 
\begin{align*}
    d_1 & = c_{11} + c_{22}, \\
    d_2 & = c_{22} - a_{11} + a_{22}.
\end{align*}
Notice that the term $R_t^{(\infty, r)} (1 - R_t^{(\infty, r)})$ corresponds to 
classical selection, so if there exists a non-trivial equilibrium in $(0, 1)$, 
it will be due to the \emph{balancing selection} term $R_t^{(\infty, r)} - d_2 / d_1$. 
As we are interested in the non-trivial equilibrium, we will only consider the case where 
$d_1 d_2 > 0$ and $\abs{d_1} > \abs{d_2}$, because only under those conditions there 
will be a non-trivial equilibrium, given by $d_2 / d_1$. 
It can be shown that this equilibrium will be stable if $d_2 < 0$ and unstable if 
$d_2 > 0$. 
Both of these cases are shown graphically, using phase diagrams, in 
figure \ref{fig:logistic_coexistence}.

\begin{figure}[ht]
    \centering
    \subcaptionbox{Case $d_2 > 0$.}[0.45\linewidth]{%
    \includegraphics[width=\linewidth]{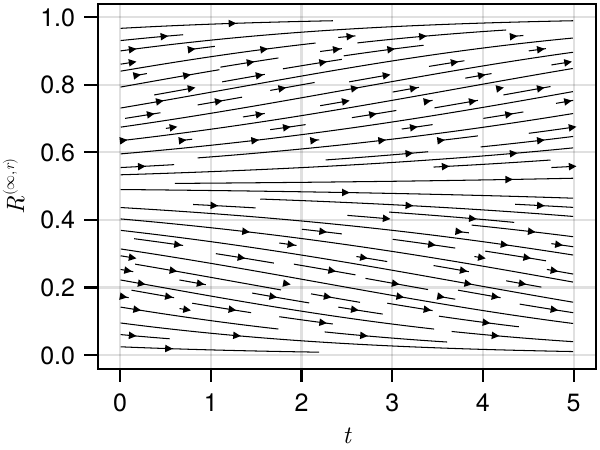}} \hfill%
    \subcaptionbox{Case $d_2 < 0$.}[0.45\linewidth]{%
    \includegraphics[width=\linewidth]{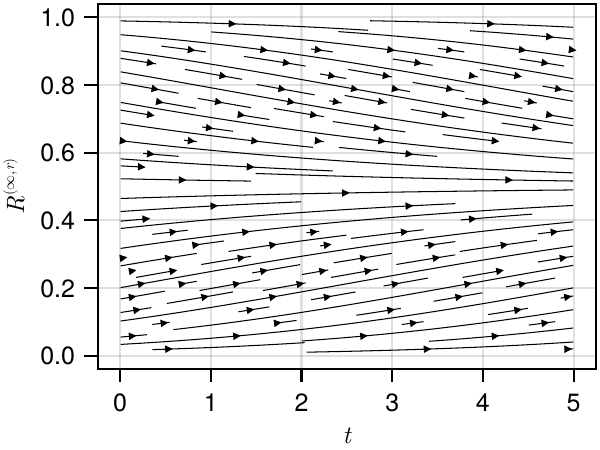}}
    \caption{Examples of non-trivial equilibrium in the logistic growth case.}
    \label{fig:logistic_coexistence}
\end{figure}

What this example shows is that even if there is no multi-type branching, there might exist a stable equilibrium in the large population limit by considering logistic growth for each of the two types within the population. 

Both examples point to the fact that the inclusion of typed branching and general Malthusians, such as the logistic one, may lead to coexistence in the stochastic model. 
We leave this as a future venue for research. 

\nocite{*}
\bibliographystyle{amsplain}
\bibliography{refs.bib}

\end{document}